\providecommand{\algorithmname}{Algorithm}
\theoremstyle{definition}
      \newtheorem{defn}{\protect\definitionname}
      \newtheorem{defn}{\protect\definitionname}[chapter]
\theoremstyle{plain}
      \newtheorem{assumption}{\protect\assumptionname}
      \newtheorem{assumption}{\protect\assumptionname}[chapter]
\theoremstyle{plain}
      \newtheorem{prop}{\protect\propositionname}
      \newtheorem{prop}{\protect\propositionname}[chapter]
\theoremstyle{plain}
	    \newtheorem{thm}{\protect\theoremname}
      \newtheorem{thm}{\protect\theoremname}[chapter]
\theoremstyle{plain}
  \newtheorem{cor}{\protect\corollaryname}
      \newtheorem{cor}{\protect\corollaryname}[chapter]
\theoremstyle{plain}
  \newtheorem{prob}{\protect\problemname}
\theoremstyle{plain}
  \newtheorem{remark}{\protect\remarkname}
      \newtheorem{remark}{\protect\remarkname}[chapter]
\setlist{leftmargin=*, topsep=0.5em, parsep=0pt, itemsep=1em, labelindent=0pt, align=left}
\providecommand{\assumptionname}{Assumption}
\providecommand{\corollaryname}{Corollary}
\providecommand{\definitionname}{Definition}
\providecommand{\propositionname}{Proposition}
\providecommand{\remarkname}{Remark}
\providecommand{\theoremname}{Theorem}
\providecommand{\problemname}{Problem}
\providecommand{\keywords}[1]{\textit{Keywords:} #1}
\newtheorem{problem}{Problem}
\begin{document}
\title{Portfolio optimization with a\\ prescribed terminal wealth distribution}

\author[1,2]{Ivan Guo}
\author[3]{Nicolas Langren\'e}
\author[1,2]{Gr\'egoire Loeper}
\author[1]{Wei Ning}

\affil[1]{\small School of Mathematical Sciences, Monash University, Melbourne, Australia}
\affil[2]{\small Centre for Quantitative Finance and Investment Strategies, Monash University, Australia}
\affil[3]{\small Data61, Commonwealth Scientific and Industrial Research Organisation, Australia}

\maketitle
\begin{abstract}
This paper studies a portfolio allocation problem, 
where the goal is to reach a prescribed wealth distribution at final time. We study this problem with the tools of optimal mass transport. We provide a dual formulation which we solve by a gradient descent algorithm. This involves solving an associated Hamilton-Jacobi-Bellman and Fokker--Planck equation by a finite difference method. 
Numerical examples for various prescribed terminal distributions are given, 
showing that we can successfully reach attainable targets.
We next consider adding consumption during the investment process, to take into account distributions that are either not attainable, or sub-optimal.
\end{abstract}
\keywords{portfolio allocation, wealth distribution target, optimal mass transport, HJB, Fokker--Planck, gradient descent}

\section{Introduction}

Over the past decades, there has been a vast amount of research
on portfolio allocation. Perhaps the most iconic result is the portfolio selection
theory by \citet{markowitz1952portfolio}, which states that investors
should determine the allocation of wealth on the basis of the trade-off
between \textit{return} and \textit{risk}. The classical objective
function in a portfolio optimization problem is to maximize the expected return given variance level.
However, the first and second moments of the return of a portfolio is only a simplified description of the wealth. 
Researchers
then introduced objective functions that include more moments, such
as skewness, to provide a more accurate statistic description of the
distribution of the return (see, for example, \citealt{kraus1976skewness} and \citealt{lee1977functional}).

The whole distribution of the portfolio wealth would provide investors a
complete information, and instead
of optimizing the first moments of the distribution, our paper introduces
an objective function which includes a target distribution of the terminal
wealth. We address the problem of controlling the portfolio allocation process to reach the prescribed terminal distribution. Of course, as we will see not all distributions are attainable. 

On the one hand, this problem can be categorized as a stochastic control problem. The state variable is influenced by a process whose value is decided at any time $t \in [0,T]$, and we define such a process as a control. We can treat the portfolio allocation process as a control in the investment process. We aim to design the time path of the portfolio allocation process such that it steers the portfolio wealth from an initial state to a prescribed terminal distribution. 

One the other hand, designing  a continuous semimartingale having 
prescribed distributions at given times can be addressed with the optimal mass transport
(OMT) theory. The optimal transport problem is an old problem first addressed in the work
of \citet{monge1781memoire}, and was later revisited by \citet{kantorovich1942translocation} leading to the so-called Monge--Kantorovich formulation. 
A comprehensive review of the extensions and applications of the Monge--Kantorovitch
problem can be found in the book by \citet{rachev1998mass} and the books of \citet{villani2003topics, villani2008optimal}. The original formulation of the problem looks for a map $f:X\to Y$ that pushes a distribution $\mu$ to another distribution $\nu$.
Later, \citet{benamou2000computational}
reinterpreted the  problem in a fluid mechanics
framework, where one is not looking only for an optimal transport map, but instead for the whole trajectory of the mass distribution over time. This contribution opened the way to the problem of continuous optimal transport.

Stochastic extensions of the discrete and time continuous OMT problem have then flourished, see e.g. \citet{mikami2006duality}, \citet{tan2013optimal}, \citet{mikami2015two}, \citet{henry2016explicit}.
Beyond its mathematical interest, the optimal mass transport problem has applications in many fields, in economy,
meteorology, astrophysics (\citealt{brenier2003reconstruction}, \citealt{loeper2006reconstruction}), image processing (\citealt{ferradans2014regularized}), finance (\citealt{dolinsky2014martingale}, \citealt{henry2017model}). 

The novelty of this paper is to provide a new
perspective on portfolio optimization inspired by OMT. An investor must decide how to allocate her portfolio between a risky and a risk-free asset. The price of the risky asset is modelled by a semimartingale, with prescribed drift and diffusion coefficients. By controlling the portfolio allocation, she wants the distribution of the wealth to match, or be close to, a given target distribution. Depending on the risky asset diffusion coefficients, not all target distributions are attainable (think for example of too high an expected return versus variance), or optimal (one could reach a ``better'' distribution than the target). We consider two different approaches: either relaxing the terminal constraint by penalization, or adding a consumption process, whereby the investor can either inject or withdraw cash from the portfolio in order to reach the target. 

The rest of the paper is organized as follows. 
In \prettyref{sec:Problem-Formulation}, we formulate the problem. Then we introduce  the dual formulation in \prettyref{sec:Duality}.
In \prettyref{sec:Numerical-Methods}, we provide a gradient descent  algorithm
to solve the dual problem, and the numerical results are presented
in \prettyref{sec:Numerical-Results}. 
We give examples for general target distributions with various penalty functionals in \prettyref{sec:Penalty-function-with-intensity}. We consider the addition of consumption/cash input  in \prettyref{sec: consumption_part} and \prettyref{sec: cash_input_part}.

\section{\label{sec:Problem-Formulation}Problem Formulation}

Let $\mathcal{D}$ be a Polish space equipped with its Borel $\sigma$-algebra.
We denote $C(\mathcal{D};\mathbb{R})$ the space of continuous functions
on $\mathcal{D}$ with values in $\mathbb{R}$, $C_{b}(\mathcal{D};\mathbb{R})$
the space of bounded continuous functions and $C_{0}(\mathcal{D};\mathbb{R})$
the space of continuous functions, vanishing at infinity. Let $\mathcal{P}(\mathcal{D})$
be the space of Borel probability measures on $\mathcal{D}$ with
a finite second moment. Denote by $\mathcal{M}(\mathcal{D};\mathbb{R})$
the space of finite signed measures on $\mathcal{D}$ with values
in $\mathbb{R}$, $\mathcal{M}_{+}(\mathcal{D};\mathbb{R})\subset\mathcal{M}(\mathcal{D};\mathbb{R})$
be the subset of non-negative measures. When $\mathcal{D}$ is compact,
the topological dual of $C_{b}$ is given by $C_{b}(\mathcal{D};\mathbb{R})^{*}=\mathcal{M}(\mathcal{D};\mathbb{R})$.
But when $\mathcal{D}$ is non-compact, $C_{b}(\mathcal{D};\mathbb{R})^{*}$ is larger than $\mathcal{M}(\mathcal{D};\mathbb{R})$.
For convenience, we often use the notation $\mathcal{E}\coloneqq[0,1]\times\mathbb{R}$. 
We say that a function $\phi:\mathcal{E}\rightarrow\mathbb{R}$ belongs
to $C_{b}^{1,2}(\mathcal{E})$ if $\phi\in C_{b}(\mathcal{E})$ and
$(\partial_{t}\phi,\partial_{x}\phi,\partial_{xx}\phi)\in C_{0}(\mathcal{E};\mathbb{R},\mathbb{R},\mathbb{R})$.
Let $\mathbb{R}^{+}$ denote non-negative real numbers, and $\mathbb{S}^{d}$ denote the set of symmetric positive semidefinite matrices.

Let $\Omega\coloneqq(\omega\in C([0,1];\mathbb{R}^{d})),$
we denote by $\mathbb{F}=(\mathcal{F}_{t})_{t\in[0,1]}$ the filtration
generated by the canonical process. The
process $W$ is a $d$-dimensional standard Brownian motion on the filtered
probability space $\left(\Omega,\mathcal{F},\mathbb{F},\mathbb{P}\right)$.

We consider a portfolio with $d$ risky assets and one risk-free asset,
the risk-free interest $r$ being set to 0 for simplicity. We assume the drift $\mu:\mathcal{E} \rightarrow \mathbb{R}^{d}$ and covariance matrix $\Sigma: \mathcal{E} \rightarrow \mathbb{S}^{d}$ of the risky assets are known Markovian processes. Without loss of generality, we set
the time horizon $T$ to be $1$. The price process of the risky assets
is denoted by $S_{t}\in\mathbb{R}^{d}$ $(0\leq t\leq1)$, and the
$i$th element of $S_{t}$ follows the semimartingale 
\begin{equation}
\frac{dS_{t}^{i}}{S_{t}^{i}}=\mu_t^{i}dt+\sum_{j=1}^{d}\sigma_t^{ij}dW_{t}^{j},\quad1\leq i\leq d,\label{eq:dynamics of S}
\end{equation}
where $\sigma_t \coloneqq\Sigma_t^{\frac{1}{2}}\in\mathbb{R}^{d\times d}$
is the diffusion coefficient matrix.

The process $\alpha = (\alpha_t)_{t \in [0,1]}$ is a Markovian control. For $t\in[0,1]$, the portfolio allocation strategy $\alpha_{t}\in\mathbb{R}^{d}$
represents the proportion of the total wealth invested into the $d$
risky assets, and $1-\sum_{i=1}^{d}\alpha_{t}^{i}$ is the proportion
invested in the risk-free asset. We define the concept of \textit{admissible
control} as follows.
\begin{defn}
\label{def: An-admissible-alpha}An admissible control process $\alpha$
for the investor on $[0,1]$ is a progressively measurable process
with respect to $\mathbb{F}$, taking values in a compact convex set
$K\subset\mathbb{R}^{d}$. The set of all admissible $\alpha$ is
compact and convex, denoted by $\mathcal{\mathcal{K}}$.
\end{defn}
We denote by $X_{t}\in\mathbb{R}$ the portfolio wealth at time $t$. Starting from an initial wealth $x_{0}$, the wealth of the self-financing
portfolio evolves as follows, 
\begin{alignat}{1}
dX_{t} & =X_{t}\alpha_{t}^{\intercal}\mu_t dt+X_{t}\alpha_{t}^{\intercal}\sigma_t dW_{t},\label{eq:SDE}\\
X_{0} & =x_{0}.
\end{alignat}

\subsection{Portfolio optimization with a prescribed terminal distribution}

We denote by $\rho_{t}\coloneqq\mathbb{P}\circ X_{t}^{-1}\in\mathcal{P}(\mathbb{R})$
the distribution of $X_{t}$. In this problem, we know the initial
distribution of the portfolio wealth $\rho_{0}\in\mathcal{P}(\mathbb{R})$,
we are given a prescribed terminal distribution $\bar{\rho}_{1}\in\mathcal{P}(\mathbb{R})$
and a convex cost function $f(\alpha_{t}):K \rightarrow\mathbb{R}$.

With $\rho_{0}$ and a process $\alpha$, the realized
terminal distribution of the portfolio wealth is $\rho_{1}\coloneqq\mathbb{P}\circ X_{1}^{-1}$ ($\rho_{1}$
is not necessarily the same as $\bar{\rho}_{1}$). 
We want $\rho_{1}$ to be close to our target $\bar{\rho}_{1}$,
hence we introduce a functional $C(\rho_{1},\bar{\rho}_{1})$ to penalize
the deviation of $\rho_{1}$ from $\bar{\rho}_{1}$. 
At the same time, we want to minimize the expectation of the transportation cost from $\rho_{0}$ to $\rho_{1}$.
Combining the expected transportation cost and the penalty functional, our objective
function is
\begin{alignat}{1}
\inf_{\alpha,\rho}\left\{ \int_{\mathcal{E}}f(\alpha_{t})d\rho(t,x)+C(\rho_{1},\bar{\rho}_{1})\right\} ,\label{eq:primal-2}
\end{alignat}
where the feasible $(\alpha,\rho)$ in (\ref{eq:primal-2}) should
satisfy the initial distribution
\begin{alignat}{2}
\rho(0,x) & =\rho_{0}(x) & \quad\forall x\in\mathbb{R},\label{eq:initial marginal}
\end{alignat}
and the Fokker--Planck equation 
\begin{alignat}{2}
\partial_{t}\rho(t,x)+\partial_{x}(\alpha_{t}^{\intercal}\mu_t x\rho(t,x))-\frac{1}{2}\partial_{xx}(\alpha_{t}^{\intercal}\Sigma_t\alpha_{t}x^{2}\rho(t,x)) & =0 & \quad\forall(t,x)\in \mathcal{E}.\label{eq:Fokker-Planck}
\end{alignat}

However, the feasible set for $(\alpha,\rho)$ defined by equality
(\ref{eq:Fokker-Planck}) is not convex, which means we may not be
able to find the optimal solution. To address this issue, we introduce
the following definition.
\begin{defn}
\label{defn:def_A_B} We define maps $\tilde{B},\tilde{A}:\mathcal{E}\rightarrow\mathbb{R}$
as $\tilde{B}(t,x)\coloneqq\alpha_{t}^{\intercal}\mu_t x$ and $\tilde{A}(t,x)\coloneqq\alpha_{t}^{\intercal}\Sigma_t\alpha_{t}x^{2}$.
Then define $B(t,x)\coloneqq\tilde{B}\rho$, $B\in\mathcal{M}(\mathcal{E};\mathbb{R})$
and $A(t,x)\coloneqq\tilde{A}\rho$, $A\in\mathcal{M}^{+}(\mathcal{E};\mathbb{R})$.
Measures $B$ and $A$ are absolutely continuous with respect to $\rho$. 
\end{defn}

We show that $B$ and $A$ are connected in the following way:
\begin{prop}
\label{prop:The-necessary-and-sufficient}
When $d>1$ (resp. $d=1$), the necessary and sufficient
condition for the existence of an $\alpha_{t} \in \mathbb{R}^d$ satisfying Definition \prettyref{defn:def_A_B}  is $A\geq\frac{B^{2}}{\left\Vert \nu_t \right\Vert ^{2}\rho}$
(resp. $A=\frac{B^{2}}{\left\Vert \nu_t \right\Vert ^{2}\rho}$), where
$\nu_t \coloneqq\Sigma_t^{-\frac{1}{2}}\mu_t$.
\end{prop}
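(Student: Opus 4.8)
The plan is to work pointwise in $(t,x)$, treating the relation between $A$, $B$ and $\rho$ as an algebraic condition on the unknown vector $\alpha_t$. Since $B$ and $A$ are absolutely continuous with respect to $\rho$ (Definition \prettyref{defn:def_A_B}), we may write $B = b\,\rho$ and $A = a\,\rho$ for scalar densities $b,a$, and the question becomes: for $\rho$-a.e.\ $(t,x)$, when does there exist $\alpha_t \in \mathbb{R}^d$ with $\alpha_t^{\intercal}\mu_t\, x = b(t,x)$ and $\alpha_t^{\intercal}\Sigma_t\alpha_t\, x^2 = a(t,x)$? So the whole statement reduces to a finite-dimensional lemma about a linear constraint together with a quadratic-form value, which I would isolate and prove first.

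The key change of variables is to set $\beta \coloneqq \sigma_t^{\intercal}\alpha_t \in \mathbb{R}^d$, where $\sigma_t = \Sigma_t^{1/2}$ is invertible (we assume $\Sigma_t \in \mathbb{S}^d$ is nonsingular, consistent with $\Sigma_t^{-1/2}$ appearing in the statement). Then $\alpha_t^{\intercal}\Sigma_t\alpha_t = \|\beta\|^2$, and the drift constraint becomes $\alpha_t^{\intercal}\mu_t = (\sigma_t^{-1}\beta)^{\intercal}\mu_t = \beta^{\intercal}(\sigma_t^{-\intercal}\mu_t) = \beta^{\intercal}\nu_t$, using $\nu_t = \Sigma_t^{-1/2}\mu_t = \sigma_t^{-1}\mu_t$ and symmetry of $\sigma_t$. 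Hence the two conditions read $\beta^{\intercal}\nu_t = b/x$ and $\|\beta\|^2 = a/x^2$ (the cases $x=0$ or $\nu_t=0$ I would note separately, where the constraints degenerate). Now it is a pure Euclidean-geometry question: decompose $\beta = \lambda \nu_t/\|\nu_t\| + \beta^{\perp}$ with $\beta^{\perp}\perp\nu_t$; the linear constraint fixes $\lambda = (b/x)/\|\nu_t\|$, so $\|\beta\|^2 = \lambda^2 + \|\beta^{\perp}\|^2 = (b/x)^2/\|\nu_t\|^2 + \|\beta^{\perp}\|^2$. When $d>1$ there is freedom to choose $\|\beta^{\perp}\|^2 \geq 0$ arbitrarily, so $\|\beta\|^2$ can be any value $\geq (b/x)^2/\|\nu_t\|^2$, i.e.\ $a/x^2 \geq (b/x)^2/\|\nu_t\|^2$, which rearranges to $a \geq b^2/(\|\nu_t\|^2)$, i.e.\ $A \geq B^2/(\|\nu_t\|^2\rho)$ as an inequality of measures (multiply through by $\rho \geq 0$). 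When $d=1$ there is no orthogonal complement, $\beta^{\perp}=0$ is forced, and equality must hold: $A = B^2/(\|\nu_t\|^2\rho)$.

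Finally I would translate back: existence of the scalar densities satisfying the pointwise relation for $\rho$-a.e.\ $(t,x)$ is equivalent to the stated measure inequality (resp.\ equality), because $A,B$ are absolutely continuous w.r.t.\ $\rho$ and $\rho \geq 0$, so multiplying/dividing by $d\rho$ is legitimate and the measure-theoretic statement is just the integrated form of the pointwise one. I would also record that the recovered $\alpha_t = \sigma_t^{-\intercal}\beta$ inherits measurability from a measurable selection of $\beta$ (for $d>1$, e.g.\ take $\beta^{\perp}$ in a fixed coordinate direction orthogonal to $\nu_t$ with the required norm, which depends measurably on the data). The main obstacle — really the only subtle point — is handling the degenerate locus where $x=0$ or $\nu_t = 0$ or $\rho=0$: there the constraints either become vacuous, inconsistent, or collapse, and one has to check the claimed (in)equality is still the right characterization (for instance, on $\{\rho = 0\}$ both sides vanish trivially, and on $\{x=0\}$ one needs $B = 0$ there, which indeed follows from $B = \alpha_t^\intercal \mu_t x \rho$). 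Everything else is the elementary linear-algebra computation above.
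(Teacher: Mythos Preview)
Your proposal is correct and follows essentially the same approach as the paper: both introduce the substitution $\beta = \Sigma_t^{1/2}\alpha_t$ (the paper via eigen-decomposition, you directly) to reduce the question to a Cauchy--Schwarz--type statement about $\beta^{\intercal}\nu_t$ versus $\|\beta\|\,\|\nu_t\|$. Your orthogonal decomposition of $\beta$ makes the sufficiency direction more explicit than the paper's somewhat terse appeal to Cauchy--Schwarz, and your remarks on the degenerate loci and measurable selection go beyond what the paper records, but the core argument is the same.
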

\begin{proof}
See \prettyref{sec:proof_AB_relation}.
\end{proof}

Using notations $\rho,B$ and $A$, the Fokker--Planck equation (\ref{eq:Fokker-Planck})
becomes linear and the SDE of the portfolio wealth reads
\begin{align}
dX_{t} & = \tilde{B}(t,X_t) dt + \tilde{A}^{\frac{1}{2}}(t,X_t) dW_{t},\label{eq:SDE_AB} \\
X_{0}  & = x_{0}.
\end{align}
From Proposition \prettyref{prop:The-necessary-and-sufficient}, at time $t$, it is possible that  the optimal drift $\tilde{B}(t,x)$ is not saturated, i.e., $\tilde{B}(t,x)^{ 2}<\left\Vert \nu_t \right\Vert ^{2}\tilde{A}(t,x)$.  It means that the drift function in SDE \eqref{eq:SDE_AB} can be greater and eventually we can reach a better terminal wealth with a higher expectation.
Then, instead of using this unsaturated drift $\tilde{B}(t,x)$ to reach the prescribed terminal distribution, we can use the drift $\tilde{B}(t,x) = \left\Vert \nu_t\right\Vert \sqrt{\tilde{A}} $ to attain a more ambitious distribution, and the extra part in the drift can be interpreted as cash saving.
In this case, even when we have multiple assets ($d>1$) in the portfolio, optimal portfolios should lie on the curve $\tilde{B}(t,x) = \left\Vert \nu_t\right\Vert \sqrt{\tilde{A}} $, as in the $d=1$ case. Any portfolio lying below the curve represents a less than ideal investment because for the same level of risk (variance), we could achieve a greater return. 
This is consistent with the efficient frontier in modern portfolio theory (\citealt{markowitz1952portfolio}).

\begin{comment}
If we add up the cash saving and the portfolio wealth $X_t$, we can get a new process \textit{wealth with saving} $X^c_t$, which follows the dynamics
\begin{alignat}{1}
dX^c_{t} & =\left\Vert \nu_t\right\Vert \sqrt{\tilde{A}}dt+\tilde{A}^{\frac{1}{2}} dW_{t},\\
X^c_{0} & =x_{0}.
\end{alignat}
\end{comment}

Now we define the concept of \textit{cash saving} at time $t$ as $c_t \coloneqq \left\Vert \nu_t\right\Vert \sqrt{\tilde{A}(t,x)} -\tilde{B}(t,x)$.
When the prescribed terminal distribution is not ambitious enough, to ensure we have as much cash saving as we can, we define the new feasible set as $\varPi\coloneqq\{(\rho,B,A):A\geq\frac{(B^+)^{2}}{\left\Vert \nu_t\right\Vert ^{2}\rho}\}$ ($B^+ \coloneqq \max(0, B)$) and we can see the set $\varPi$ is convex. 
To penalize measures out of the set $\varPi$, we define a
cost function $F:\mathcal{E}\times\mathbb{R}\times\mathbb{R}\rightarrow\mathbb{R}^{+}\cup\{+\infty\}$ such that
\begin{equation}
\label{eq:Def of F}
F\hspace{-0.1em}\left(\frac{B}{\rho},\frac{A}{\rho}\right) = f\hspace{-0.1em}\left(\frac{B}{\rho},\frac{A}{\rho}\right) + \delta(\rho, B, A),
\end{equation}
where $f:\mathcal{E}\times\mathbb{R}\times\mathbb{R}\rightarrow\mathbb{R}^{+}$  is a convex function and $\delta(\rho, B, A)$ is a delta function defined as
\begin{align}
\delta(\rho, B, A)=\begin{cases}
0 & \text{if } (\rho,B,A)\in \varPi,\\
+\infty & \text{otherwise}.
\end{cases}
\end{align}

Now we are ready to introduce formally the problem:
\begin{problem}
\label{prob:1}Starting from an initial distribution $\rho_{0}$, with a
prescribed terminal distribution $\bar{\rho}_{1}$ and a cost function (\ref{eq:Def of F}), we want to solve
the infimum of the functional
\begin{equation}
V(\rho_{0},\bar{\rho}_{1})=\inf_{\rho, B, A }\int_{\mathcal{E}}F\hspace{-0.1em}\left(\frac{B}{\rho},\frac{A}{\rho}\right)d\rho+C(\rho_{1},\bar{\rho}_{1})\label{eq:primal value function}
\end{equation}
over all  $(\rho,B,A)  \in \mathcal{M}( \mathcal{E}; \mathbb{R}\times\mathbb{R}\times\mathbb{R})$ satisfying the constraints
\begin{gather}
\partial_{t}\rho(t,x)+\partial_{x}B(t,x)-\frac{1}{2}\partial_{xx}A(t,x)=0 \quad\forall(t,x)\in \mathcal{E},\label{eq:FK equation}\\
\rho(0,x)=\rho_{0}(x) \quad\forall x\in\mathbb{R}.\label{eq:initial density}
\end{gather}
\end{problem}

\subsection{Assumptions}

We make the following assumptions which will hold throughout the paper.
\begin{assumption}
The probability measure $\rho_{t}$, $t\in(0,1]$ is absolutely continuous
with respect to the Lebesgue measure.
\end{assumption}
\begin{assumption}
\label{assu:C-is-continuous.} The penalty functional $C(\cdot,\bar{\rho}_{1}):\mathcal{P}(\mathbb{R})\rightarrow\mathbb{R}^{+}$
is lower semi-continuous and convex. We have $C(\rho_{1},\bar{\rho}_{1})=0$
if and only if $\rho_{1}(x)=\bar{\rho}_{1}(x)$ almost everywhere.
\end{assumption}
\begin{assumption}
$\,$ \label{assu:finiteness of F}
\begin{enumerate}[label=(\roman*)]
\item The function $F(\frac{B}{\rho},\frac{A}{\rho})$ is non-negative,
lower semi-continuous and strictly convex in $(\frac{B}{\rho},\frac{A}{\rho})$. 
\item The cost function $F(\frac{B}{\rho},\frac{A}{\rho})$ is coercive
in the sense that there exist constants $m>1$ and $K>0$ such that
\[
\left|\frac{B}{\rho}\right|^{m}+\left|\frac{A}{\rho}\right|^{m}\leq K\left(1+F\hspace{-0.1em}\left(\frac{B}{\rho},\frac{A}{\rho}\right)\right), \qquad \forall (t,x)\in \mathcal{E}.
\]
\item For all $(t,x)\in\mathcal{K}$, and for any $(\rho,B,A)\in\varPi$, we have 
\[
\int_{\mathcal{E}}\left|F\hspace{-0.1em}\left(\frac{B}{\rho},\frac{A}{\rho}\right)\right|d\rho<\infty,
\]
and 
\[
\mathbb{E}\left[\int_{0}^{1}\left|\frac{B}{\rho}\right|^{2}+\left|\frac{A}{\rho}\right|dt\right]<\infty.
\]
\end{enumerate}
\end{assumption}

For simplicity, we write $F(\frac{B}{\rho},\frac{A}{\rho}) \coloneqq F(t, x, \frac{B}{\rho},\frac{A}{\rho})$ if there is no ambiguity.

\section{\label{sec:Duality}Duality}

In this section, we introduce the dual problem to Problem \ref{prob:1},
this allows us to give optimality condition for the primal problem.
First of all, we find out the convex conjugate of the cost functional,
which will be used in the later proof.

\subsection{Convex Conjugate \label{subsec:Fenchel-Legendre-transformation}}

Define a function $G:C_{b}(\mathcal{E};\mathbb{R}\times\mathbb{R}\times\mathbb{R})\rightarrow\mathbb{R} \cup \{+\infty\}$
as 
\begin{alignat*}{1}
G(u,b,a) & =\sup_{\rho, \tilde{B}, \tilde{A}} \Bigl\{ u\rho+b\tilde{B}\rho + a\tilde{A}\rho -  F(\tilde{B},\tilde{A}) \rho \Bigr\}\\
 & =\sup_{\rho}\Bigl\{\rho\Bigl[u+\sup_{\tilde{A}\geq\frac{(\tilde{B}^+)^{2}}{\left\Vert \nu_t\right\Vert ^{2}}}\left(b\tilde{B}+a\tilde{A}-F(\tilde{B},\tilde{A})\right)\Bigr]\Bigr\}\\
 & =\sup_{\rho}\Bigl\{\rho\Bigl[u+F^{*}(b,a)\Bigr]\Bigr\},
\end{alignat*}
where $F^{*}$ is the convex conjugate of $F$. Since $\rho(t,x)$
is non-negative, it is obvious that 
\begin{align*}
G(u,b,a)=\begin{cases}
0 & \text{if }u+F^{*}(b,a)\leq0\quad\forall(t,x)\in\mathcal{K},\\
+\infty & \text{otherwise}.
\end{cases}
\end{align*}
If we restrict the domain of its convex conjugate $G^{*}:C_b^*(\mathcal{E};\mathbb{R}\times\mathbb{R}\times\mathbb{R}) \rightarrow  \mathbb{R} \cup \{ +\infty\}$ to $\mathcal{M} (\mathcal{E};\mathbb{R}\times\mathbb{R}\times\mathbb{R})$,
then
\begin{align}
G^{*}(\rho,B,A) & =\sup_{(u, b, a) \in C_{b}(\mathcal{E};\mathbb{R}\times\mathbb{R}\times\mathbb{R}) }\left\{ u\rho+bB+aA : u+F^{*}(b,a) \leq 0 \right\}. \label{eq:legendre transformation}
\end{align}
Because the function to be optimized is linear and $\rho(t,x) \geq 0$, we can see the optimal $u^{*}=-F^{*}(b,a)$ in (\ref{eq:legendre transformation}).
With $F$ being convex and  lower-semicontinuous, we have
\begin{align*}
G^{*}(\rho,B,A) & =\sup_{(b,a) \in C_{b}(\mathcal{E};\mathbb{R}\times\mathbb{R})}\left\{ -F^{*}(b,a)+b\tilde{B}+a\tilde{A} \right\} \rho\\
 & =F\hspace{-0.1em}\left(\frac{B}{\rho},\frac{A}{\rho}\right)\rho.
\end{align*}
The supremum is pointwise in time and space, and we can write 
\begin{equation}
\int_{\mathcal{E}}F\hspace{-0.1em}\left(\frac{B}{\rho},\frac{A}{\rho}\right)d\rho=\sup_{(u,b,a) \in C_{b}(\mathcal{E};\mathbb{R}\times\mathbb{R}\times\mathbb{R})}  \Bigl\{ \int_{\mathcal{E}}ud\rho +bdB+adA:u+F^{*}(b,a)\leq0\Bigr\}.\label{eq:equivalent integrant}
\end{equation}

\subsection{Dual Problem}

Now we can state our main result. A key element in the dual problem
is  the Hamilton--Jacobi--Bellman (HJB) equation: 
\begin{equation}
\partial_{t}\phi+\sup_{\tilde{A}\geq\frac{(\tilde{B}^+)^{2}}{\left\Vert \nu_t\right\Vert ^{2}}}\Bigl\{\partial_{x}\phi\tilde{B}+\frac{1}{2}\partial_{xx}\phi\tilde{A}-F(\tilde{B},\tilde{A})\Bigr\}=0.\label{eq:HJB_first}
\end{equation}
For any $\phi(t,x) \in C_{b}^{1,2}(\mathcal{E})$  solution of the HJB equation (\ref{eq:HJB_first}),
 It\^o's formula yields, 
\begin{align*}
\int_{\mathbb{R}}\phi_{1}d\rho_{1}-\phi_{0}d\rho_{0} & =\int_{\mathcal{E}}\left(\partial_{t}\phi+\partial_{x}\phi\tilde{B}+\frac{1}{2}\partial_{xx}\phi\tilde{A}\right)d\rho\\
 & =\int_{\mathcal{E}}\left(-F^{*}(\partial_{x}\phi,\frac{1}{2}\partial_{xx}\phi)+\partial_{x}\phi\tilde{B}+\frac{1}{2}\partial_{xx}\phi\tilde{A}\right)d\rho\\
 & \leq\int_{\mathcal{E}}F(\tilde{B},\tilde{A})d\rho.
\end{align*}
Adding the penalty functional to both sides yields 
\begin{align}
\int_{\mathbb{R}}\phi_{1}d\rho_{1}-\int_{\mathbb{R}}\phi_{0}d\rho_{0}+C(\rho_{1},\bar{\rho}_{1}) & \leq\int_{\mathcal{E}}F(\tilde{B},\tilde{A})d\rho+C(\rho_{1},\bar{\rho}_{1}).\label{eq:ito inequality}
\end{align}
Taking the infimum of the left hand side of (\ref{eq:ito inequality})
over $\rho_{1}$ and taking the infimum of the right hand side of (\ref{eq:ito inequality})
over $(\rho,B,A)$, we get 
\begin{align*}
-C^{*}(-\phi_{1})-\int_{\mathbb{R}}\phi_{0}d\rho_{0} 
& \leq\inf_{(\rho,B,A) \in \mathcal{M}( \mathcal{E}; \mathbb{R}\times\mathbb{R}\times\mathbb{R}) }\int_{\mathcal{E}}F\hspace{-0.1em}\left(\frac{B}{\rho},\frac{A}{\rho}\right)d\rho+C(\rho_{1},\bar{\rho}_{1}) \\
& \leq V(\rho_0, \bar{\rho}_1).
\end{align*}
The following result shows that optimizing the left hand side yields
an equality.
\begin{thm}
[Duality]\label{thm:Duality_theorem}

When $C(\rho_{1},\bar{\rho}_{1})$ is continuous, there holds
\begin{alignat}{1}
V(\rho_{0},\bar{\rho}_{1}) & =\sup_{\phi}\left\{ -C^{*}(-\phi_{1})-\int_{\mathbb{R}}\phi_{0}d\rho_{0}\right\} ,\label{eq:dual form}
\end{alignat}
where the supremum is taken over all $\phi(t,x)\in C_{b}^{1,2}(\mathcal{E})$
satisfying 
\begin{align}
\partial_{t}\phi(t,x)+\sup_{\tilde{A}\geq\frac{(\tilde{B}^+)^{2}}{\left\Vert \nu_t\right\Vert ^{2}}}\left\{ \partial_{x}\phi\tilde{B}+\frac{1}{2}\partial_{xx}\phi\tilde{A}-F(\tilde{B},\tilde{A})\right\}  & \le0, & \forall(t,x)\in[0,1]\times\mathbb{R}.\label{eq:constraint of dual problem}
\end{align}
\end{thm}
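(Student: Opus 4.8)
The inequality $\sup_\phi\bigl\{-C^*(-\phi_1)-\int_\mathbb{R}\phi_0\,d\rho_0\bigr\}\le V(\rho_0,\bar\rho_1)$ is already supplied by the It\^o/integration-by-parts computation preceding the statement, so the entire content is the reverse inequality, i.e.\ the \emph{absence of a duality gap}. The plan is to obtain it by a Fenchel--Rockafellar argument applied to the \emph{dual} functional. Work on $E=C_b^{1,2}(\mathcal{E})$ with its natural norm and introduce the linear operator $\Lambda\phi:=(\partial_t\phi,\partial_x\phi,\tfrac12\partial_{xx}\phi)$ with values in $C_b(\mathcal{E})^3$; this is well defined and continuous precisely because $C_b^{1,2}$ was built with $\partial_t\phi,\partial_x\phi,\partial_{xx}\phi$ vanishing at infinity. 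Set $\mathcal{G}_1(\phi):=C^*(-\phi_1)+\int_\mathbb{R}\phi_0\,d\rho_0$ and $\mathcal{G}_2(v_0,v_1,v_2):=0$ if $v_0+F^*(v_1,v_2)\le 0$ pointwise and $+\infty$ otherwise; both are convex and lower semicontinuous ($\mathcal{G}_2$ because $F^*$ is convex as a Legendre transform, together with the convexity of the feasible cone built into $F$ via $\delta$; $\mathcal{G}_1$ because $C$, hence $C^*$, is convex by Assumption~\ref{assu:C-is-continuous.}). By the very definition of constraint \eqref{eq:constraint of dual problem},
\[
\sup_{\phi}\Bigl\{-C^*(-\phi_1)-\int_\mathbb{R}\phi_0\,d\rho_0\Bigr\}=-\inf_{\phi\in E}\bigl\{\mathcal{G}_1(\phi)+\mathcal{G}_2(\Lambda\phi)\bigr\},
\]
so it suffices to dualize the right-hand side and recognize the result as the primal value \eqref{eq:primal value function}.

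The qualification I would invoke is that $\mathcal{G}_1$ is globally Lipschitz, hence continuous, on $E$: the map $\phi\mapsto\int\phi_0\,d\rho_0$ is linear and continuous, and since $C\ge 0$ with $\inf C=C(\bar\rho_1,\bar\rho_1)=0$ (Assumption~\ref{assu:C-is-continuous.}) and every element of $\mathcal{P}(\mathbb{R})$ has unit mass, one gets $|C^*(\psi)|\le\|\psi\|_\infty$, so $\phi\mapsto C^*(-\phi_1)$ is $1$-Lipschitz for the sup-norm --- this is exactly where the hypothesis ``$C$ continuous'' is used, as it forces $C^*$ finite-valued and continuous. Since the constants lie in the domain of $\mathcal{G}_2\circ\Lambda$ (because $F^*(0,0)=-\inf F\le 0$ by Assumption~\ref{assu:finiteness of F}(i)), the Fenchel--Rockafellar theorem applies, yielding no gap and attainment of the dual:
\[
-\inf_{\phi\in E}\bigl\{\mathcal{G}_1(\phi)+\mathcal{G}_2(\Lambda\phi)\bigr\}=\min_{(\rho,B,A)}\bigl\{\mathcal{G}_1^*\!\bigl(\Lambda^*(\rho,B,A)\bigr)+\mathcal{G}_2^*\!\bigl(-(\rho,B,A)\bigr)\bigr\},
\]
the minimum running a priori over the topological dual of $C_b(\mathcal{E})^3$. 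It then remains to identify this expression with $V(\rho_0,\bar\rho_1)$. The computation of $\mathcal{G}_2^*$ is exactly the $G$/$G^*$ calculation of Section~\ref{subsec:Fenchel-Legendre-transformation}: it equals $\int_\mathcal{E}F(\tfrac{B}{\rho},\tfrac{A}{\rho})\,d\rho$ when $\rho\ge 0$, $A\ge 0$, and $+\infty$ otherwise; crucially, the coercivity bound of Assumption~\ref{assu:finiteness of F}(ii) forces the optimal dual variables to be genuine finite measures with the absolute continuity of Definition~\ref{defn:def_A_B}, which disposes of the part of $C_b^*$ that is not in $\mathcal{M}$. Computing $\mathcal{G}_1^*\circ\Lambda^*$ by integration by parts turns the pairing $\int_\mathcal{E}(\partial_t\phi\,d\rho+\partial_x\phi\,dB+\tfrac12\partial_{xx}\phi\,dA)$ into the distributional Fokker--Planck operator of \eqref{eq:FK equation} plus boundary terms at $t=0,1$; matching these (with the correct orientation) through the Fenchel--Young equality forces the initial condition \eqref{eq:initial density} and produces exactly the penalty $C(\rho_1,\bar\rho_1)$, using $C=C^{**}$ from Assumption~\ref{assu:C-is-continuous.}. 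Assembling the pieces gives the minimum equal to the infimum in \eqref{eq:primal value function}, hence equality in \eqref{eq:dual form}.

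I expect two points to carry the real difficulty. The first is the no-gap step itself in this non-reflexive, non-compact setting: one must be careful to work with a topology in which $\mathcal{G}_1$ is continuous while $\mathcal{G}_2$ stays lower semicontinuous, and to verify that the dual problem genuinely lives on $\mathcal{M}(\mathcal{E};\mathbb{R}^3)$ rather than the larger $C_b^*$ --- here the continuity of $C$ and the coercivity of $F$ are indispensable. An alternative, if the Fenchel--Rockafellar bookkeeping is awkward, is Sion's minimax theorem applied after using the coercivity estimate of Assumption~\ref{assu:finiteness of F}(ii) to restrict $\rho$ to a weak-$*$ compact set of probability measures with uniformly bounded second moment. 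The second obstacle is the regularity of the optimal multiplier: the duality above a priori produces $\phi$ only in a generalized sense, and one needs an approximation/mollification argument to exhibit a true $\phi\in C_b^{1,2}(\mathcal{E})$ that still satisfies the HJB inequality \eqref{eq:constraint of dual problem} without loss in the objective --- exploiting convexity of $F^*$ (so that the constraint passes through a Jensen-type averaging under smoothing) and the finite-second-moment assumption on the marginals (so that the boundary pairings $\int\phi_0\,d\rho_0$, $\int\phi_1\,d\rho_1$ are stable under the approximation).
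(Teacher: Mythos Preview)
Your proposal is essentially the paper's proof: Fenchel--Rockafellar duality with the same ingredients (the $G/G^*$ computation of Section~\ref{subsec:Fenchel-Legendre-transformation}, integration by parts against the Fokker--Planck constraint, and a qualification at a constant test function). The packaging differs only cosmetically: you work directly on $E=C_b^{1,2}$ with the linear operator $\Lambda\phi=(\partial_t\phi,\partial_x\phi,\tfrac12\partial_{xx}\phi)$ and the pair $(\mathcal{G}_1,\mathcal{G}_2)$, whereas the paper follows the Tan--Touzi device of passing to the variables $(u,b,a,r)$ and encoding ``$(u,b,a,r)$ is represented by some $\phi\in C_b^{1,2}$'' inside the second functional $\beta$. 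These two formulations are equivalent.

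Two comments on your final paragraph. First, the reduction from $C_b^*$ to $\mathcal{M}$ is indeed the delicate point, but the paper does \emph{not} argue it via coercivity of $F$; it proves a separate proposition (Appendix~\ref{sec:extend_non_compact}) by decomposing an element of $C_b^*$ into its measure part plus a remainder that vanishes on $C_0$, and then comparing $\alpha^*,\beta^*$ on the two pieces. Your coercivity heuristic is plausible but would need more work to be rigorous. Second, your worry about the regularity of the optimal multiplier $\phi$ is misplaced in this setup: since the supremum in \eqref{eq:dual form} is, by construction, already taken over $\phi\in C_b^{1,2}(\mathcal{E})$ (that is what $E$ is in your formulation, and what ``represented'' means in the paper's), no mollification step is required. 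Fenchel--Rockafellar gives attainment only on the \emph{conjugate} side, which here is the primal infimum over $(\rho,B,A)$; neither you nor the paper claims the supremum over $\phi$ is attained.
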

\begin{proof}
This proof is an application of the Fenchel--Rockafellar duality theorem,
e.g., \citet[Theorem 1.12]{brezis2010functional}.
From the constraint (\ref{eq:FK equation}), we have that for all $\phi \in C_{b}^{1,2}(\mathcal{E})$,
\begin{equation}
\int_{\mathbb{R}}\int_{0}^{1}\phi\partial_{t}\rho+\phi\partial_{x}B-\frac{1}{2}\phi\partial_{xx}Adtdx=0.
\label{eq:int FK}
\end{equation}
Integrating by parts we obtain
\begin{equation}
\int_{\mathbb{R}}\phi_{1}d\rho_{1}-\phi_{0}d\rho_{0}-\int_{\mathcal{E}}\partial_{t}\phi d\rho+\partial_{x}\phi dB+\frac{1}{2}\partial_{xx}\phi dA=0.\label{eq:Lagrangian penalty 1-1}
\end{equation}
Because of equation (\ref{eq:equivalent integrant}), we can reformulate
the primal problem (\ref{eq:primal value function}) as a saddle
point problem: 
\begin{alignat}{1}
V(\rho_{0},\bar{\rho}_{1}) & =\inf_{\rho,B,A}\sup_{u+F^{*}(b,a)\leq0}\int_{\mathcal{E}}ud\rho+bdB+adA+C(\rho_{1},\bar{\rho}_{1}).\label{eq:objective function_2}
\end{alignat}
Adding the Lagrangian penalty (\ref{eq:Lagrangian penalty 1-1}) to
the functional (\ref{eq:objective function_2}), then Problem \ref{prob:1}
can be written as
\begin{gather*}
V(\rho_{0},\bar{\rho}_{1})=\inf_{\rho,B,A}\sup_{u+F^{*}(b,a)\leq0,\phi}\int_{\mathcal{E}}ud\rho+bdB+adA+C(\rho_{1},\bar{\rho}_{1})\\
+\int_{\mathbb{R}}\phi_{1}d\rho_{1}-\phi_{0}d\rho_{0}-\int_{\mathcal{E}}\partial_{t}\phi d\rho+\partial_{x}\phi dB+\frac{1}{2}\partial_{xx}\phi dA.
\end{gather*}
We write $C^{*}(r): C_b(\mathbb{R}; \mathbb{R}) \rightarrow \mathbb{R} \cup \{+\infty\}$ for the convex conjugate of functional $C(\rho_{1},\bar{\rho}_{1})$:
\[
C^{*}(r)=\sup_{\rho_{1}\geq0}\Bigl\{\int_{\mathbb{R}}rd\rho_{1}-C(\rho_{1},\bar{\rho}_{1})\Bigr\}.
\]
Here we define the functional $\alpha:C_{b}(\mathcal{E};\mathbb{R}\times\mathbb{R}\times\mathbb{R}\times\mathbb{R})\rightarrow\mathbb{R}\cup\{+\infty\}$
by 
\begin{align}
\alpha(u,b,a,r)=\begin{cases}
C^{*}(r) & \text{ if }u+F^{*}(b,a)\leq0,\\
+\infty & \text{ otherwise }.
\end{cases}
\label{eq:alpha}
\end{align}
Its convex conjugate $\alpha^{*}:C_{b}^{*}(\mathcal{E};\mathbb{R}\times\mathbb{R}\times\mathbb{R}\times\mathbb{R})\rightarrow\mathbb{R}\cup\{+\infty\}$
is defined as
\begin{alignat}{1}
\alpha^{*}(\rho,B,A,\rho_{1}) & =\sup_{u+F^{*}(b,a)\leq0,r}\int_{\mathcal{E}}ud\rho+bdB+adA+\Bigl[\int_{\mathbb{R}}rd\rho_{1}-C^{*}(r)\Bigr].
\label{eq: alpha_star_def}
\end{alignat}
If we restrict the domain to $\mathcal{M} (\mathcal{E};\mathbb{R}\times\mathbb{R}\times\mathbb{R}\times\mathbb{R})$,
with \eqref{eq:equivalent integrant} and Assumption \ref{assu:C-is-continuous.}, we have
\begin{alignat*}{1}
\alpha^{*}(\rho,B,A,\rho_{1}) & =
\begin{cases}
\int_{\mathcal{E}}F(\frac{B}{\rho},\frac{A}{\rho})d\rho+C(\rho_{1},\bar{\rho}_{1}) & \text{if }\rho\in\mathcal{M}_{+}\text{ and }B=\tilde{B}\rho,A=\tilde{A}\rho,\\
+\infty  & \text{ otherwise}.
\end{cases}
\end{alignat*}
Indeed, if $\rho$ is not positive in \eqref{eq: alpha_star_def}, we would let $b = a = 0$ and $u = -\lambda \mathbbm{1}_O$ for some $O$ such that $\rho(O) <0$ and let $\lambda \rightarrow +\infty$. If $B$ or $A$ are not absolutely continuous with respect to $\rho$, we can find some $O$ such that $\rho(O) = 0$ but $B(O) \neq 0$ or $A(O) \neq 0$. Then we let $u = -F^*(b,a)$ and $b = a = \lambda \mathbbm{1}_O$,  and $\alpha^*(\rho, B, A) \geq  \lambda  B(O) +  \lambda  A(O) \rightarrow +\infty$ by letting $\lambda \rightarrow \pm \infty $ depending on the sign of $B(O)$ and $A(O)$. 

Next, we say that the set $(u,b,a,r)\in C_{b}(\mathcal{E};\mathbb{R}\times\mathbb{R}\times\mathbb{R}\times\mathbb{R})$
is \textit{represented} by $\phi\in C_{b}^{1,2}(\mathcal{E})$ if
\[
u=-\partial_{t}\phi,\quad b=-\partial_{x}\phi,\quad a=-\frac{1}{2}\partial_{xx}\phi,\quad r=\phi_{1}.
\]
Then define $\beta:C_{b}(\mathcal{E};\mathbb{R}\times\mathbb{R}\times\mathbb{R}\times\mathbb{R})\rightarrow\mathbb{R}\cup\{+\infty\}$
as follows, 
\begin{align}
\beta(u,b,a,r)=\begin{cases}
\int_{\mathbb{R}}\phi_{0}d\rho_{0} & \text{ if }(u,b,a,r)\text{ is represented by }\phi\in C_{b}^{1,2}(\mathcal{E}),\\
+\infty\qquad & \text{ otherwise}.
\end{cases}
\label{eq:beta}
\end{align}
Notice that $\beta$ is well-defined, indeed, it does not depend on
the choice of $\phi$. If both $\phi,\psi$ represent $u,b,a,r$,
then $\phi_{1}=\psi_{1}\forall x\in\mathbb{R}$, $\partial_{t}\phi(t,x)=\partial_{t}\psi(t,x)$,
$\partial_{x}\phi(t,x)=\partial_{x}\psi(t,x)$, $\partial_{xx}\phi(t,x)=\partial_{xx}\psi(t,x)$
$\forall(t,x)\in\mathcal{K}$. It follows that $\phi_{0}(x)=\psi_{0}(x)$
$\forall x\in\mathbb{R}$. The set of represented functions $(u,b,a,r)$
is a linear subspace, and $\beta$ is linear with respect to $(u,b,a,r)$
in the convex set. Hence $\beta$ is convex and its convex conjugate
$\beta^{*}:C_{b}^{*}(\mathcal{E};\mathbb{R}\times\mathbb{R}\times\mathbb{R}\times\mathbb{R})\rightarrow\mathbb{R}\cup\{+\infty\}$
is 
\begin{align*}
\beta^{*}(\rho,B,A,\rho_{1})= & \sup_{u,b,a,r}  \int_{\mathcal{E}}ud\rho+bdB+adA+\int_{\mathbb{R}} rd\rho_{1}-\phi_{0}d\rho_{0},\\
 & \quad \text{over all }(u,b,a,r)\in C_{b}(\mathcal{E};\mathbb{R}\times\mathbb{R}\times\mathbb{R}\times\mathbb{R}) \text{ represented by }\phi\in C_{b}^{1,2}(\mathcal{E}).
\end{align*}
Or equivalently, 
\begin{alignat*}{1}
\beta^{*}(\rho,B,A,\rho_{1}) & =\sup_{\phi}\int_{\mathcal{E}}-\partial_{t}\phi d\rho-\partial_{x}\phi dB-\frac{1}{2}\partial_{xx}\phi dA+\int_{\mathbb{R}}\phi_{1}d\rho_{1}-\phi_{0}d\rho_{0}.
\end{alignat*}
We find that $\beta^{*}(\rho,B,A,\rho_{1})=0$ if $(\rho,B,A,\rho_{1})$
satisfies (\ref{eq:Lagrangian penalty 1-1}), and $\beta^{*}(\rho,B,A,\rho_{1})=+\infty$
otherwise. 

Now we can express our objective functional $V(\rho_0, \bar{\rho}_1)$ as
\begin{alignat*}{1}
V(\rho_{0},\bar{\rho}_{1}) 
& ={\inf_{(\rho,B,A)\in \mathcal{M}(\mathcal{E};\mathbb{R}\times\mathbb{R}\times\mathbb{R})}\left\{ \alpha^{*}(\rho,B,A,\rho_{1})+\beta^{*}(\rho,B,A,\rho_{1})\right\} }\\
 & ={\inf_{(\rho,B,A,\rho_{1})\in \mathcal{M}(\mathcal{E};\mathbb{R}\times\mathbb{R}\times\mathbb{R}\times\mathbb{R})}\left\{ \alpha^{*}(\rho,B,A,\rho_{1})+\beta^{*}(\rho,B,A,\rho_{1})\right\} }\\
 & = \inf_{(\rho,B,A,\rho_{1})\in C_{b}^{*}(\mathcal{E};\mathbb{R}\times\mathbb{R}\times\mathbb{R}\times\mathbb{R})}\left\{ \alpha^{*}(\rho,B,A,\rho_{1})+\beta^{*}(\rho,B,A,\rho_{1})\right\}. 
\end{alignat*}
The second equality is because $\beta^*(\rho,B,A,\rho_{1}) = +\infty$ if $\rho_1$ does not equal to $\rho(t,x)$ at time $t=1$. We prove the third equality in \prettyref{sec:extend_non_compact}.

We can let $\phi(t,x)=t$, then $u=-1,b=0,a=0,r=1$. We can see $\alpha(-1,0,0,1)=1$ and it
is continuous in $(u,b,a,r)$ at this point, and $\beta(-1,0,0,1)=0$
being finite at this point.
Finally, the conditions of Fenchel duality theorem in \citet[Theorem 1.12]{brezis2010functional}
are fulfilled, and it implies
\begin{alignat*}{1}
V(\rho_{0},\bar{\rho}_{1}) 
 & = \inf_{(\rho,B,A,\rho_{1})\in C_{b}^{*}(\mathcal{E};\mathbb{R}\times\mathbb{R}\times\mathbb{R}\times\mathbb{R})}\left\{ \alpha^{*}(\rho,B,A,\rho_{1})+\beta^{*}(\rho,B,A,\rho_{1})\right\} \\
 & =\sup_{(u,b,a,r)\in C_{b}(\mathcal{E};\mathbb{R}\times\mathbb{R}\times\mathbb{R}\times\mathbb{R})}\left\{ -\alpha(-u,-b,-a,-r)-\beta(u,b,a,r)\right\},
 \end{alignat*}
over the set $(u,b,a,r)$ being represented by $\phi\in C_{b}^{1,2}(\mathcal{E})$,
and satisfying $-u+F^{*}(-b,-a)\leq0$. 

Therefore we express $V(\rho_{0},\bar{\rho}_{1}) $ in terms of $\phi$: 
\begin{alignat*}{1}
V(\rho_{0},\bar{\rho}_{1}) & =\sup_{(u,b,a,r)\in C_{b}(\mathcal{E};\mathbb{R}\times\mathbb{R}\times\mathbb{R})}\left\{ -C^{*}(-r)-\int_{\mathbb{R}}\phi_{0}d\rho_{0}\right\} \\
 & =\sup_{\phi \in C_{b}^{1,2}(\mathcal{E})  }\left\{ -C^{*}(-\phi_{1})-\int_{\mathbb{R}}\phi_{0}d\rho_{0}\right\} ,
\end{alignat*}
under the constraint $\partial_{t}\phi+F^{*}(\partial_{x}\phi,\frac{1}{2}\partial_{xx}\phi)\leq0.$\textcolor{brown}{{}
}\textcolor{black}{As a consequence of Fenchel duality theorem, the
infimum in the primal problem is attained if finite.} This completes
the proof.
\end{proof}
Actually, using the same proof as in \citet{guo2019calibration}, we can write the dual formulation in the following way:
\begin{cor}
\label{cor:duality corollary}When $C(\rho_{1},\bar{\rho}_{1})$ is continuous, there holds 
\begin{equation}
V(\rho_{0},\bar{\rho}_{1})=\sup_{\phi_{1}}\left\{ -C^{*}(-\phi_{1})-\int_{\mathbb{R}}\phi_{0}d\rho_{0}\right\} ,
\label{eq:dual with phi_1}
\end{equation}
where the supremum is running over all functions $\phi_{1}\in C_{b}^{2}(\mathbb{R})$,
and $\phi_{0}$ is a viscosity solution
of the Hamilton--Jacobi--Bellman equation
\begin{gather}
\begin{cases}
-\phi_{t}-\sup_{\tilde{A}\geq\frac{(\tilde{B}^+)^{2}}{\left\Vert \nu_t\right\Vert ^{2}}}\Bigl[\phi_{x}\tilde{B}+\frac{1}{2}\phi_{xx}\tilde{A}-F(\tilde{B},\tilde{A})\Bigr]=0, & \text{in }[0,1)\times\mathbb{R},\\
\phi(1,x)=\phi_{1}(x), & \text{on }[1]\times\mathbb{R}.
\end{cases}\label{eq:HJB PDE}
\end{gather}
\end{cor}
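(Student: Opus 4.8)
The plan is to prove \eqref{eq:dual with phi_1} by showing that the supremum on its right-hand side coincides with the supremum in the dual problem \eqref{eq:dual form} of Theorem~\ref{thm:Duality_theorem}; since that theorem already identifies the latter with $V(\rho_0,\bar\rho_1)$, the two optimization problems will then have the same value. I would prove this coincidence by a double inequality, using the stochastic control interpretation of the HJB equation \eqref{eq:HJB PDE} in one direction and a regularization of its viscosity solution in the other, exactly as in \citet{guo2019calibration}. As a preliminary, recall from Section~\ref{subsec:Fenchel-Legendre-transformation} that the constraint \eqref{eq:constraint of dual problem} is equivalent to $\partial_t\phi+F^*(\partial_x\phi,\tfrac12\partial_{xx}\phi)\le0$, so every $\phi\in C_b^{1,2}(\mathcal{E})$ admissible in \eqref{eq:dual form} is a classical supersolution of \eqref{eq:HJB PDE}; also, membership in $C_b^{1,2}(\mathcal{E})$ forces the terminal trace $\phi_1=\phi(1,\cdot)$ to lie in $C_b^2(\mathbb{R})$. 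For $g\in C_b^2(\mathbb{R})$ let $\Phi^g$ denote the unique bounded viscosity solution of \eqref{eq:HJB PDE} with $\Phi^g(1,\cdot)=g$; its existence and uniqueness follow from the stochastic control representation $\Phi^g(t,x)=\sup\mathbb{E}\bigl[g(X_1)-\int_t^1 F(\tilde B_s,\tilde A_s)\,ds\mid X_t=x\bigr]$ and a comparison principle, using the convexity and coercivity in Assumption~\ref{assu:finiteness of F} after the localization customary for the unbounded state space.

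For the first inequality, take any admissible $\phi$ and set $g=\phi_1\in C_b^2(\mathbb{R})$. Running the It\^o computation that precedes Theorem~\ref{thm:Duality_theorem} (the one leading to \eqref{eq:ito inequality}) from an arbitrary intermediate time along any admissible controlled path shows $\phi(t,x)\ge\mathbb{E}\bigl[g(X_1)-\int_t^1 F(\tilde B_s,\tilde A_s)\,ds\mid X_t=x\bigr]$; taking the supremum over controls gives $\phi\ge\Phi^g$ on $\mathcal{E}$, in particular $\phi(0,\cdot)\ge\Phi^g(0,\cdot)$ pointwise. Since $-C^*(-\phi_1)$ depends only on $\phi_1=g$, we obtain $-C^*(-\phi_1)-\int_{\mathbb{R}}\phi(0,\cdot)\,d\rho_0\le -C^*(-g)-\int_{\mathbb{R}}\Phi^g(0,\cdot)\,d\rho_0$; taking the supremum over admissible $\phi$ shows the supremum in \eqref{eq:dual form} is at most that in \eqref{eq:dual with phi_1}.

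For the reverse inequality, fix $\phi_1\in C_b^2(\mathbb{R})$, put $\Phi=\Phi^{\phi_1}$, and construct a sequence $\phi^\varepsilon\in C_b^{1,2}(\mathcal{E})$ satisfying \eqref{eq:constraint of dual problem} with $\phi^\varepsilon\to\Phi$ uniformly on $\mathcal{E}$ as $\varepsilon\downarrow0$, so that $\phi^\varepsilon(0,\cdot)\to\Phi(0,\cdot)$ and $\phi^\varepsilon(1,\cdot)\to\phi_1$ uniformly. Because $C^*$ is $1$-Lipschitz for the supremum norm (it is a supremum of affine maps $r\mapsto\int r\,d\rho_1-C(\rho_1,\bar\rho_1)$ over probability measures $\rho_1$) and $\rho_0$ is a fixed probability measure, $-C^*(-\phi^\varepsilon(1,\cdot))-\int_{\mathbb{R}}\phi^\varepsilon(0,\cdot)\,d\rho_0\to -C^*(-\phi_1)-\int_{\mathbb{R}}\Phi(0,\cdot)\,d\rho_0$; since each $\phi^\varepsilon$ is admissible in \eqref{eq:dual form}, the left side is bounded by the supremum in \eqref{eq:dual form}, and passing to the limit and then taking the supremum over $\phi_1$ shows the supremum in \eqref{eq:dual with phi_1} is at most that in \eqref{eq:dual form}. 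Together the two inequalities give \eqref{eq:dual with phi_1}, with $\phi_0$ the time-zero value of the viscosity solution of \eqref{eq:HJB PDE}; finiteness of the infimum in Problem~\ref{prob:1} again follows from Fenchel duality as in Theorem~\ref{thm:Duality_theorem}.

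The main obstacle is the construction of the admissible approximants $\phi^\varepsilon$, since \eqref{eq:HJB PDE} is only degenerate parabolic and $\Phi$ need not be $C^{1,2}$. I would regularize by an inf-convolution in $x$ — which preserves the supersolution property and provides a one-sided Hessian bound, hence second derivatives almost everywhere — then mollify in $(t,x)$, which is legitimate because $F^*$ is convex in the gradient arguments, so by Jensen's inequality the mollified function is again a supersolution, and finally add a correction $c_\varepsilon(1-t)$ with $c_\varepsilon\downarrow0$ to absorb the $O(\varepsilon)$ defect and restore the strict inequality $\partial_t\phi^\varepsilon+F^*(\partial_x\phi^\varepsilon,\tfrac12\partial_{xx}\phi^\varepsilon)\le0$; since this correction vanishes at $t=1$, the terminal trace differs from $\phi_1$ only by the spatial regularization and converges to $\phi_1$. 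Some care is needed at infinity — truncating $\mathbb{R}$ and controlling growth through the coercivity bound in Assumption~\ref{assu:finiteness of F} — to ensure $\phi^\varepsilon$ really belongs to $C_b^{1,2}(\mathcal{E})$, i.e. has derivatives in $C_0$. All of this is carried out in \citet{guo2019calibration}, to which the detailed construction can be deferred.
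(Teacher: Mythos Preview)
Your proposal is correct and follows essentially the same approach as the paper, which simply defers to \citet{guo2019calibration} for the proof; the commented-out sketches in the source confirm that the intended argument is precisely the comparison-principle inequality (any admissible supersolution dominates the viscosity solution with the same terminal datum) together with a smooth-supersolution approximation of the viscosity solution for the reverse direction. Your explicit use of the stochastic control representation for the first inequality and the inf-convolution/mollification/correction scheme for the second spells out the details that the paper leaves to the reference.
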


Because the minimal objective function \eqref{eq:primal value function}
is a trade-off between the cost function and the penalty functional,
the optimal $\phi_1$ in the dual problem (\ref{eq:dual with phi_1})
will not in general ensure that $\rho_{1}$ reaches $\bar{\rho}_{1}$, unless the penalty
functional goes to infinity for $\rho_1 \neq \bar{\rho}_1$.
When $\bar{\rho}_{1}$ is attainable, it can be realized by choosing the penalty
functional as an indicator function
\begin{equation}
C(\rho_{1},\bar{\rho}_{1})=\begin{cases}
0 & \text{ if }\rho_{1}=\bar{\rho}_{1},\\
+\infty & \;\text{if}\:\rho_{1}\neq\bar{\rho}_{1}.
\end{cases}\label{eq:characteristic function}
\end{equation}
Using the penalty functional \eqref{eq:characteristic function} is equivalent to adding the terminal constraint
$\rho_{1}=\bar{\rho}_{1},\, \forall x \in \mathbb{R}$.  This also recovers our problem to the classical
optimal transport problem. 
\begin{cor}
When $C(\rho_{1},\bar{\rho}_{1})$ is defined as (\ref{eq:characteristic function}),
there holds
\begin{alignat}{1}
V(\rho_{0},\bar{\rho}_{1}) & =\sup_{\phi_{1}}\left\{ \int_{\mathbb{R}}\phi_{1}d\bar{\rho}_{1}-\phi_{0}d\rho_{0}\right\} ,\label{eq:normal OT dual-1-1}
\end{alignat}
where the supremum is running over all $\phi_{1}\in C_{b}^{2}(\mathbb{R})$
and $\phi_{0}$ is a viscosity solution
of the Hamilton--Jacobi--Bellman equation (\ref{eq:HJB PDE}).
\end{cor}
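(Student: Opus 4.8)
The plan is to specialize the general duality result of Theorem \ref{thm:Duality_theorem} (or equivalently Corollary \ref{cor:duality corollary}) to the particular choice of penalty functional given by the indicator function \eqref{eq:characteristic function}. The only nontrivial ingredient is to compute the convex conjugate $C^{*}$ of this indicator functional and substitute it into the dual formula \eqref{eq:dual with phi_1}. Recall that $C(\rho_1,\bar\rho_1) = 0$ when $\rho_1 = \bar\rho_1$ and $+\infty$ otherwise, so for $r \in C_b(\mathbb{R};\mathbb{R})$ one has
\[
C^{*}(r) = \sup_{\rho_1 \geq 0}\Bigl\{\int_{\mathbb{R}} r\,d\rho_1 - C(\rho_1,\bar\rho_1)\Bigr\} = \int_{\mathbb{R}} r\,d\bar\rho_1,
\]
since the supremum is attained only at $\rho_1 = \bar\rho_1$ (the constraint set collapses to a single point). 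Hence $C^{*}(-\phi_1) = -\int_{\mathbb{R}}\phi_1\,d\bar\rho_1$, and $-C^{*}(-\phi_1) = \int_{\mathbb{R}}\phi_1\,d\bar\rho_1$.

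Substituting this into \eqref{eq:dual with phi_1} immediately yields
\[
V(\rho_0,\bar\rho_1) = \sup_{\phi_1}\Bigl\{\int_{\mathbb{R}}\phi_1\,d\bar\rho_1 - \int_{\mathbb{R}}\phi_0\,d\rho_0\Bigr\},
\]
with the supremum over $\phi_1 \in C_b^2(\mathbb{R})$ and $\phi_0$ the viscosity solution of the HJB equation \eqref{eq:HJB PDE} with terminal condition $\phi(1,\cdot) = \phi_1$. This is exactly the claimed formula \eqref{eq:normal OT dual-1-1}, so the proof is essentially just this substitution.

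One caveat I would address is that Theorem \ref{thm:Duality_theorem} and Corollary \ref{cor:duality corollary} are stated under the hypothesis that $C(\rho_1,\bar\rho_1)$ is continuous, whereas the indicator functional \eqref{eq:characteristic function} is merely lower semi-continuous (and convex), consistent with Assumption \ref{assu:C-is-continuous.} but not with the stronger continuity used to invoke Fenchel--Rockafellar directly. So I would argue by approximation: take a sequence of continuous convex penalty functionals $C_n$ increasing to $C$ (for instance $C_n(\rho_1,\bar\rho_1) = n\, W_2(\rho_1,\bar\rho_1)$ or $n$ times a bounded-Lipschitz metric, truncated to remain finite), apply Corollary \ref{cor:duality corollary} to each $C_n$, and pass to the limit. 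On the primal side, $V_n(\rho_0,\bar\rho_1) \uparrow V(\rho_0,\bar\rho_1)$ by monotone convergence of the infima, since any feasible $(\rho,B,A)$ with $\rho_1 \neq \bar\rho_1$ gets cost $\to +\infty$ while feasible triples with $\rho_1 = \bar\rho_1$ keep a fixed finite cost; on the dual side $C_n^{*} \downarrow C^{*}$ pointwise so the dual values also converge to the right expression, and the chain of inequalities preceding Theorem \ref{thm:Duality_theorem} pins down the limit. The main obstacle is therefore not any deep computation but rather making this approximation argument airtight — in particular ensuring attainability of $\bar\rho_1$ (so that $V$ is finite) and verifying that the optimal $\phi_1$'s for the $C_n$ problems remain within a class for which the HJB equation \eqref{eq:HJB PDE} is well-posed in the viscosity sense, so the limit is again of the stated form. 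Once attainability is assumed (as the statement's context suggests), the rest is routine.
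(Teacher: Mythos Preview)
Your computation of $C^{*}(-\phi_1)=-\int_{\mathbb{R}}\phi_1\,d\bar\rho_1$ is correct, and you are right to flag the continuity hypothesis: Theorem~\ref{thm:Duality_theorem} and Corollary~\ref{cor:duality corollary} are stated for \emph{continuous} $C$, so the indicator \eqref{eq:characteristic function} does not fall under them directly. Your proposed fix is an approximation $C_n\uparrow C$ by continuous convex penalties, followed by passage to the limit on both the primal and dual sides. This can be made to work (monotone convergence of $\sup$ on the dual side is automatic since $-C_n^{*}\uparrow -C^{*}$; the primal side needs a compactness/lower-semicontinuity argument to ensure $V_n\uparrow V$ rather than merely $\lim V_n\leq V$), but it is indirect and the details you leave open are exactly the nontrivial ones.

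The paper takes a different and cleaner route: rather than invoking Theorem~\ref{thm:Duality_theorem} and patching, it \emph{reruns the Fenchel--Rockafellar argument from scratch} with a modified functional
\[
\alpha(u,b,a,r)=\begin{cases}\int_{\mathbb{R}} r\,d\bar\rho_1 & \text{if } u+F^{*}(b,a)\le 0,\\ +\infty & \text{otherwise},\end{cases}
\]
in place of \eqref{eq:alpha}. The point is that $r\mapsto\int r\,d\bar\rho_1$ is a continuous linear functional, so the Fenchel--Rockafellar hypotheses (continuity of $\alpha$ at the test point $\phi(t,x)=t$) are satisfied directly, with no approximation needed. One then checks that $\alpha^{*}(\rho,B,A,\rho_1)=\int F\,d\rho+\sup_r\int r\,d(\rho_1-\bar\rho_1)$, and the last supremum is precisely the indicator \eqref{eq:characteristic function}. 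This buys you the result in one clean application of duality, bypassing the limit argument entirely. Your approach is not wrong, but it trades a short direct computation for an approximation scheme whose justification is at least as long as the paper's full proof.
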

\begin{proof}
This proof is very similar to the one of Theorem \ref{thm:Duality_theorem}, hence the repetitive steps are omitted here. 
Being different from the proof of Theorem \ref{thm:Duality_theorem}, in this case, we define the functional $\alpha:C_{b}(\mathcal{E};\mathbb{R}\times\mathbb{R}\times\mathbb{R}\times\mathbb{R})\rightarrow\mathbb{R}\cup\{+\infty\}$
by
\begin{alignat*}{1}
\alpha(u,b,a,r) & =\begin{cases}
\int_{\mathbb{R}}rd\bar{\rho}_{1} & \text{ if }u+F^{*}(b,a)\leq0,\\
+\infty & \text{ otherwise}.
\end{cases}
\end{alignat*}
Then its convex conjugate of $\alpha^*: C^*_{b}(\mathcal{E};\mathbb{R}\times\mathbb{R}\times\mathbb{R}\times\mathbb{R})\rightarrow\mathbb{R}\cup\{+\infty\}$ is 
\begin{alignat*}{1}
\alpha^{*}(\rho,B,A,\rho_{1}) & =\sup_{u+F^{*}(b,a)\leq0,r}\int_{\mathcal{E}}ud\rho+bdB+adA+ \int_{\mathbb{R}} rd\rho_{1}-rd\bar{\rho}_{1}.
\end{alignat*}
We restrict the domain to $\mathcal{M} (\mathcal{E};\mathbb{R}\times\mathbb{R}\times\mathbb{R}\times\mathbb{R})$, we have
\begin{alignat*}{1}
\alpha^{*}(\rho,B,A,\rho_{1}) 
 & =\int_{\mathcal{E}}F\hspace{-0.1em}\left(\frac{B}{\rho},\frac{A}{\rho}\right)d\rho+\sup_{r}\left\{ \int_{\mathbb{R}}r(d\rho_{1}-d\bar{\rho}_{1})\right\} \\
 & =\int_{\mathcal{E}}F\hspace{-0.1em}\left(\frac{B}{\rho},\frac{A}{\rho}\right)d\rho+C(\rho_{1},\bar{\rho}_{1}).
\end{alignat*}
Note that $\sup_{r}\int_{\mathbb{R}}r(d\rho_{1}-d\bar{\rho}_{1})$ is equal to $0$
if $\rho_{1}=\bar{\rho}_{1}\forall x\in\mathbb{R}$ and is equal to $+\infty$
otherwise, which is equivalent to $C(\rho_{1},\bar{\rho}_{1})$ in
(\ref{eq:characteristic function}). Define $\beta:C_{b}(\mathcal{E};\mathbb{R}\times\mathbb{R}\times\mathbb{R}\times\mathbb{R})\rightarrow\mathbb{R}\cup\{+\infty\}$ by \eqref{eq:beta}, 
let $\phi(t,x)=t$, the conditions of Fenchel duality theorem in \citet[Theorem 1.12]{brezis2010functional}
are fulfilled. Therefore, we get
\begin{alignat*}{1}
V(\rho_{0},\bar{\rho}_{1}) & =\sup_{(u,b,a,r) \in C_{b}(\mathcal{E};\mathbb{R}\times\mathbb{R}\times\mathbb{R}\times\mathbb{R})}\left\{ -\alpha(-u,-b,-a,-r) - \beta(u, b, a, r) \right\} \\
 & =\sup_{(u,b,a,r) \in C_{b}(\mathcal{E};\mathbb{R}\times\mathbb{R}\times\mathbb{R}\times\mathbb{R})}\left\{ \int_{\mathbb{R}}rd\bar{\rho}_{1}-\int_{\mathbb{R}}\phi_{0}d\rho_{0}\right\},
\end{alignat*}
over the set $(u,b,a,r)$ being represented
by $\phi\in C_{b}^{1,2}(\mathcal{E})$, and satisfying $-u+F^{*}(-b,-a)\leq0$.
For the same reasons as in Corollary  \ref{cor:duality corollary}, we can express $V(\rho_{0},\bar{\rho}_{1})$ in terms of $\phi$:
\begin{align*}
V(\rho_{0},\bar{\rho}_{1}) & =\sup_{\phi_{1}}\left\{ \int_{\mathbb{R}}\phi_{1}d\bar{\rho}_{1}-\int_{\mathbb{R}}\phi_{0}d\rho_{0}\right\} ,
\end{align*}
where $\phi_{0}(x)$ is \textcolor{black}{a viscosity solution }of
the Hamilton--Jacobi--Bellman equation (\ref{eq:HJB PDE}).
\end{proof}

\section{\label{sec:Numerical-Methods}Numerical Methods for the Dual Problem}

There has been a vast amount of numerical algorithms for the optimal
mass transport problem. Gradient descent based methods are widely
used to solve the reformulated dual problem of the Monge--Kantorovich
problem, for example, by \citet{chartrand2009gradient} and \citet{tan2013optimal}.
\citet{cuturi2013sinkhorn} looked at transport problems from a maximum
entropy perspective and computed the OT distance through Sinkhorn's
matrix scaling algorithm. This algorithm is also used for the entropic
regularization of optimal transport by \citet{benamou2019generalized}.

In this paper, we also use a gradient descent based method to solve
the dual problem in \prettyref{sec:Duality}. We know $\phi(t,x)$ is the
solution of the HJB equation (\ref{eq:HJB PDE}).
For a given terminal function $\phi_{1}$, we can calculate $\phi_{0}$
by solving the HJB equation backward.

\subsection{\label{subsec:forward_backward}Finite Difference Scheme}

First of all, to get $\phi(0,x)$, we solve the following PDE 
\begin{equation}
\partial_{t}\phi+\sup_{\tilde{A}\geq\frac{(\tilde{B}^+)^{2}}{\left\Vert \nu_t\right\Vert ^{2}}}\left\{ \partial_{x}\phi\tilde{B}+\frac{1}{2}\partial_{xx}\phi\tilde{A}-F(\tilde{A},\tilde{B})\right\} =0,\label{eq:HJB_example-2}
\end{equation}
with a given terminal boundary condition $\phi_{1}(x)$ backwardly, using an implicit
finite difference scheme. We let 
\begin{equation}
(\tilde{A}^{*},\tilde{B}^{*})=\underset{\tilde{A}\geq\frac{(\tilde{B}^+)^{2}}{\left\Vert \nu_t\right\Vert ^{2}}}{\mathrm{argmax}}\left\{ \partial_{x}\phi\tilde{B}+\frac{1}{2}\partial_{xx}\phi\tilde{A}-F(\tilde{A},\tilde{B})\right\} .
\label{eq: optimal_A_B}
\end{equation}
In the numerical setting, we use $N$ time steps and $M$
space grid points. We use a constant time step $\Delta t$ and
a constant spatial step $\Delta x$. We discretize the PDE \eqref{eq:HJB_example-2}
using a forward approximation for $\partial_t\phi$, a central approximation
for $\partial_x\phi$, and a standard approximation for $\partial_{xx}\phi$. With
some manipulation, we get the discretized form of \eqref{eq:HJB_example-2} as
\begin{alignat}{1}
\Bigl(\frac{\Delta t\tilde{A}_{i}^{*^n}}{2(\Delta x)^{2}}-\frac{\Delta t\tilde{B}_{i}^{*^n}}{2\Delta x}\Bigr)\phi_{i-1}^{n}+\Bigl(-1-\frac{\Delta t\tilde{A}_i^{*^n}}{(\Delta x)^{2}}\Bigr)\phi_{i}^{n}+\Bigl(\frac{\Delta t\tilde{A}_{i}^{*^n}}{2(\Delta x)^{2}}+\frac{\Delta t\tilde{B}_{i}^{*^n}}{2\Delta x}\Bigr)\phi_{i+1}^{n} & =-\phi_{i}^{n+1}+\Delta tF(\tilde{A}_{i}^{*^n},\tilde{B}_{i}^{*^n}),\label{eq:implicit form}
\end{alignat}
where the optimal controls $\tilde{A}^{*^n}$ and $\tilde{B}^{*^n}$ depend
on $\phi^{n}$. 
It is difficult to check the stability condition in our PDE 
because the optimal $\tilde{A}^{*^n},\tilde{B}^{*^n}$ are unknown, but
fortunately implicit finite difference methods have a weaker requirement
than explicit finite difference methods. 
At the $n$-th time step of the implicit finite difference method, although we do not have the true values for $\phi^{n}$, 
we can make an initial guess of $(\tilde{A}^{*^n},\tilde{B}^{*^n})_0$ using the known values $\phi^{n+1}$,
then use a fixed-point iteration scheme to generate a sequence $(\tilde{A}^{*^n},\tilde{B}^{*^n})_{k, k=1,2,...}$ until 
$(\tilde{A}^{*^n},\tilde{B}^{*^n})_{k}$ converges.
This method is also implemented in \citet{guo2019robust}.

With the optimal drift $\tilde{B}^{*}$ and diffusion $\tilde{A}^{*}$
known, we can now propagate forward with the Fokker--Planck equation \eqref{eq:FK equation}
to find the empirical terminal density $\rho_{1}$. 
With an initial wealth $x_{0}$, the initial distribution $\rho_{0}$ is a
Dirac Delta distribution $\delta(x-x_0)$. Since we used implicit finite difference
to solve the HJB equation \eqref{eq:HJB_example-2} backward, we use
an explicit scheme for the forward Fokker--Planck equation \eqref{eq:FK equation}.
Then the discretized form is
\begin{alignat}{1}
\frac{\rho_{i}^{n+1}-\rho_{i}^{n}}{\Delta t}+\frac{\tilde{B}_{i+1}^{*^{n}}\rho_{i+1}^{n}-\tilde{B}_{i-1}^{*^{n}}\rho_{i-1}^{n}}{2\Delta x}-\frac{1}{2}\frac{\tilde{A}_{i+1}^{*^{n}}\rho_{i+1}^{n}+\tilde{A}_{i-1}^{*^{n}}\rho_{i-1}^{n}-2\tilde{A}_{i}^{*^{n}}\rho_{i}^{n}}{\Delta x^{2}} & =0.\label{eq:FK equation discretized}
\end{alignat}

\subsection{Optimization algorithm}

A key role in the gradient descent method is the optimality condition. By providing a gradient, the computation is faster and more accurate. 
For convenience, we define another
function 
\begin{alignat}{1}
\tilde{V}(\phi_{1}) & \coloneqq C^{*}(-\phi_{1}) + \int_{\mathbb{R }}\phi_{0} d\rho_{0},\label{eq:value_minimize}
\end{alignat}
and $V(\rho_{0},\bar{\rho}_{1})=-\inf_{\phi_{1}}\tilde{V}(\phi_{1})$.
Then we need to find an optimal $\phi_{1}$ to minimize $\tilde{V}(\phi_{1})$.
The change of $\tilde{V}(\phi_{1})$ w.r.t $\phi_{1}$ is 
\begin{alignat}{1}
\delta\tilde{V}(\phi_{1}) 
& = \delta C^{*}(-\phi_{1}) + \int_{\mathbb{R}} \rho_{0}\frac{\delta\phi_{0}}{\delta\phi_{1}}\delta\phi_{1}dx,\\
& =\delta C^{*}(-\phi_{1}) + \int_{\mathbb{R}} \rho_{0}\delta\phi_{0}dx.
\label{eq:variation}
\end{alignat}
We know that $\phi(t,x)$ in \eqref{eq:value_minimize} satisfies $F^{*}\hspace{-0.1em}\left(\partial_{x}\phi,\frac{1}{2}\partial_{xx}\phi\right)=-\partial_{t}\phi.$
If we add a small variation $\delta\phi$ to $\phi$ and denote $\partial_{x}\phi$ as $p$ and $\frac{1}{2}\partial_{xx}\phi$ as $q$
for short,  
then we get $\partial_p F^{*}(p,q) \partial_{x}\delta\phi+ \frac{1}{2} \partial_q F^{*}(p,q) \partial_{xx}\delta\phi=-\partial_{t}\delta\phi,$
which is equivalent to 
\begin{equation}
\partial_{t}\delta\phi+\partial x\delta\phi\tilde{B}^{*}+\frac{1}{2}\partial_{xx}\delta\phi\tilde{A}^{*}=0.\label{eq:PDE of variation}
\end{equation}
Multiplying PDE (\ref{eq:PDE of variation}) by an arbitrary density
function $\rho(t,x)$ and with integration by parts, we have 
\[
\int_{\mathbb{R}}\rho_{1}\delta\phi_{1}-\rho_{0}\delta\phi_{0}dx-\int_{\mathbb{R}}\int_{0}^{1}\delta\phi\partial_{t}\rho+\delta\phi\partial_{x}(\rho\tilde{B}^{*})-\frac{1}{2}\delta\phi\partial_{xx}(\rho\tilde{A}^{*})dxdt=0.
\]
Since the equation $\partial_{t}\rho+\partial_{x}(\rho\tilde{B}) - \frac{1}{2}\partial_{xx}(\rho\tilde{A})=0$
holds for all admissible $(\tilde{A},\tilde{B})$, we get 
\begin{equation}
\int_{\mathbb{R}}\rho_{0}\delta\phi_{0}dx=\int_{\mathbb{R}}\rho_{1}\delta\phi_{1}dx.
\label{eq:replace phi_0 with phi_1}
\end{equation}
Substituting \eqref{eq:replace phi_0 with phi_1} into \eqref{eq:variation}, we can see an optimal terminal function $\phi_{1}$ should satisfy the optimality condition 
\begin{equation}
\nabla\tilde{V}(\phi_{1})=\frac{\delta C^{*}(-\phi_{1})}{\delta \phi_{1}}+\rho_{1}=0, \quad \forall x\in \mathbb{R}.\label{eq:optimality condition}
\end{equation}
\begin{remark}
When $C(\bar{\rho}_{1},\rho_{1})$ is defined as (\ref{eq:characteristic function}),
the corresponding optimality condition is 
\begin{equation}
\nabla\tilde{V}(\phi_{1})=-\bar{\rho}_{1}+\rho_{1}=0,  \quad \forall x\in \mathbb{R}.\label{eq:optimality condition_inf}
\end{equation}
\end{remark}
Now we are ready to solve the dual problem numerically. 
In Algorithm \ref{alg:Gradient-Descent}, we state the
gradient descent based algorithm to look for the optimal $\phi_{1}$ in \eqref{eq:dual with phi_1}.
It includes solving the HJB equation and the Fokker--Planck equation with a finite difference method
combined with a fixed-point iteration, as described in Section \prettyref{subsec:forward_backward}.
A similar numerical scheme can be found in \citet{guo2019calibration} for calibrating volatilities by optimal transport. 
\begin{algorithm}
Initial guess $\phi_{N}^{1}\coloneqq0$

\While{$1 \leq k \leq \text{max iteration}$ and $\bigl\Vert \nabla\tilde{V}(\phi_{N}^{k}) \bigr\Vert_{\infty} > \text{tolerance }$}{

Let $\phi_{N}=\phi_{N}^{k}$;

\For{time step $n=N-1 : 0$:}{

Let $\phi_n = \phi_{n+1}$, solve the PDE (\ref{eq:implicit form}) with $ ( \tilde{A}^{*^n}, \tilde{B}^{*^n} )_0$ obtained from 
\eqref{eq: optimal_A_B}.

Get the value vector $\phi_{n}^{0}$;

\While{$1 \leq j \leq \text{max iteration}$ and $\bigl\Vert\phi_{n}^{j}-\phi_{n}^{j-1}\bigr\Vert_{2}>\text{tolerance }$}{
Let $\phi_n = \phi_{n}^{j-1}$, solve the PDE (\ref{eq:implicit form}) with $( \tilde{A}^{*^n}, \tilde{B}^{*^n} )_j$ 
obtained from \eqref{eq: optimal_A_B}.

Get the value vector $\phi_{n}^{j}$;

$j=j+1$; }

Let $\phi_{n}=\phi_{n}^{j}$, store the optimal controls $( \tilde{A}^{*^n} , \tilde{B}^{*^{n}} ) = ( \tilde{A}^{*^n} , \tilde{B}^{*^{n}} )_j$;
} Compute the empirical distribution $\rho_{1}^{k}$ from
$\rho_{0}$ with Fokker--Planck equation (\ref{eq:FK equation discretized});

Compute the gradient vector $\nabla\tilde{V}(\phi_{N}^{k}) = \left( \frac{\delta C^{*}(-\phi_{N}^{k})}{\delta \phi_{N}^{k}} +\rho_{1}^{k}\right)\Delta x$;

Update $\phi_{N}^{k+1}$ with Quasi-Newton Method using the gradient
information $\nabla\tilde{V}(\phi_{N}^{k})$;

$k=k+1$; } The optimal $\phi_{N}=\phi_{N}^{k}$. 
\caption{A gradient descent based optimization scheme\label{alg:Gradient-Descent}}
\end{algorithm}

\section{\label{sec:Numerical-Results}Numerical Results}
In this section, we will apply Algorithm \ref{alg:Gradient-Descent} and demonstrate various numerical examples. We also consider the situations with cash saving and cash input during the investment process.

\subsection{Penalty functional  with an intensity parameter \label{sec:Penalty-function-with-intensity}}
Before we demonstrate the numerical results, we need to choose an
appropriate penalty functional $C(\rho_{1},\bar{\rho}_{1})$.
There is a range of methods to measure distribution discrepancy. 
A comprehensive survey on the distance or similarity measures between 
probability density functions (PDFs) is provided by \citet{cha2007comprehensive}. 
Note that our choice of penalty functional is not restricted to metrics, 
as long as $C(\rho_{1},\bar{\rho}_{1})$ satisfies
Assumption \ref{assu:C-is-continuous.} and describes similarity
of the two PDFs.

The most intuitive choice is the $L^2$ norm of the difference. This quadratic function
is convex and easy to implement. 
In the first example, we use the squared Euclidean distance as the penalty functional and $F(\tilde{A},\tilde{B})=(\tilde{A}-0.2)^2 + (\tilde{B}-0.2)^2$
as the cost function. We define the penalty functional as 
\[
C(\rho_{1},\bar{\rho}_{1})=\frac{\lambda}{2} \int_{\mathbb{R}} (\rho_{1}-\bar{\rho}_{1})^{2}dx,
\]
where the parameter $\lambda$ can be regarded as the intensity of
the penalty for the inconsistency.
Then the dual problem \eqref{eq:dual with phi_1} can be expressed explicitly
as 
\begin{alignat}{1}
V(\rho_{0},\bar{\rho}_{1}) & =\sup_{\phi_{1}}\left\{ \int_{\mathbb{R}}-\frac{1}{2\lambda}\phi_{1}^{2}+\phi_{1}\bar{\rho}_{1}-\rho_{0}\phi_{0}dx\right\} .\label{eq:intensity value}
\end{alignat}

\begin{comment}
By contradiction, we can prove that when $F$ is a function in $\tilde{A}$
only, the optimal solution always happens on the boundary, i.e., $\tilde{B}^{*2}=\left\Vert \nu_t\right\Vert ^{2}\tilde{A}^{*}$.
Hence we can rewrite the PDE (\ref{eq:HJB_example-2}) as 
\begin{equation}
\phi_{t}+\sup_{\tilde{B}}\left\{ \phi_{x}\tilde{B}+\frac{1}{2}\phi_{xx}\frac{\tilde{B}^{2}}{\left\Vert \nu_t\right\Vert ^{2}}-\frac{\tilde{B}^{4}}{\left\Vert \nu_t\right\Vert ^{4}}+0.2\frac{\tilde{B}^{2}}{\left\Vert \nu_t\right\Vert ^{2}} - 0.01 \right\} =0.\label{eq:continuous PDE}
\end{equation}

Then we need to find the optimal $\tilde{B}$ in PDE (\ref{eq:continuous PDE})
by solving for the real cubic root of the First Order Condition: 
\[
\phi_{x}+\left(\phi_{xx}+0.4\right)\frac{\tilde{B}^{*}}{\left\Vert \nu_t\right\Vert ^{2}}-4\frac{\tilde{B}^{*3}}{\left\Vert \nu_t\right\Vert ^{4}}=0\text{.}
\]
\end{comment}
In this and the following numerical examples, we set the initial wealth $x_{0}=5, \mu = 0.1, \sigma = 0.1$. 
Figures \ref{fig:(6,1)-lambda1} and \ref{fig:(6,1)-lambda20} 
compare the empirical distribution of
the terminal wealth ($\rho_{1}$) and the prescribed terminal
distribution ($\bar{\rho}_{1}$) for different intensities $\lambda$, where $\bar{\rho}_{1}=\mathcal{N}(6,1)$\footnote{We denote $\mathcal{N}(\mu, \sigma)$ a Normal distribution with mean $\mu$ and standard deviation $\sigma$.}.
We can see that $\rho_{1}$ gets closer to $\bar{\rho}_{1}$
as we increase the intensity of the penalty. 
In figures \ref{fig:(6,1)-lambda_inf} and \ref{fig:optimal-functions}, we use the penalty functional 
\eqref{eq:characteristic function}, which is equivalent to setting $\lambda = +\infty$. As shown in Figure \ref{fig:(6,1)-lambda_inf}, 
this penalty functional makes $\rho_{1}$ attain the target $\bar{\rho}_{1}$, 
and the plot \ref{fig:optimal-functions}
illustrates the optimal function $\phi_{1}$ and the corresponding $\phi_{0}$ we got from Algorithm \ref{alg:Gradient-Descent}. 
\begin{figure}
\subfloat[$\lambda=1$\label{fig:(6,1)-lambda1}]{\includegraphics[scale=0.7]{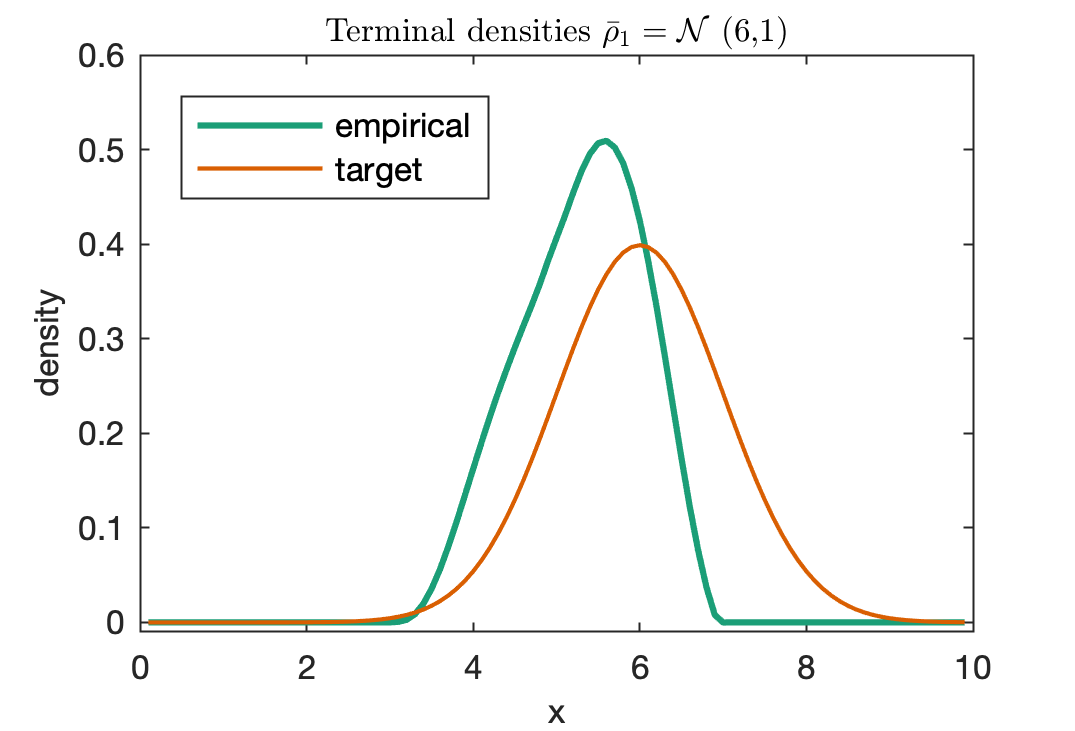}}
\qquad{}
\subfloat[$\lambda=20$\label{fig:(6,1)-lambda20}]{\includegraphics[scale=0.7]{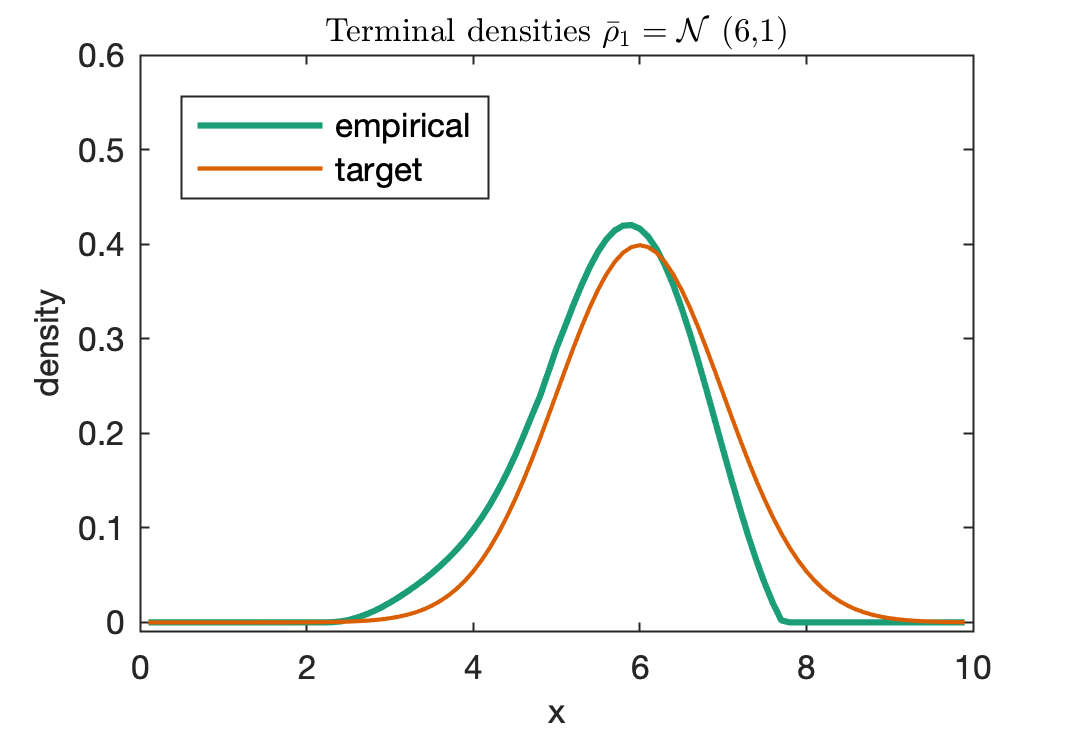}}

\subfloat[infinite penalty\label{fig:(6,1)-lambda_inf}]{\includegraphics[scale=0.7]{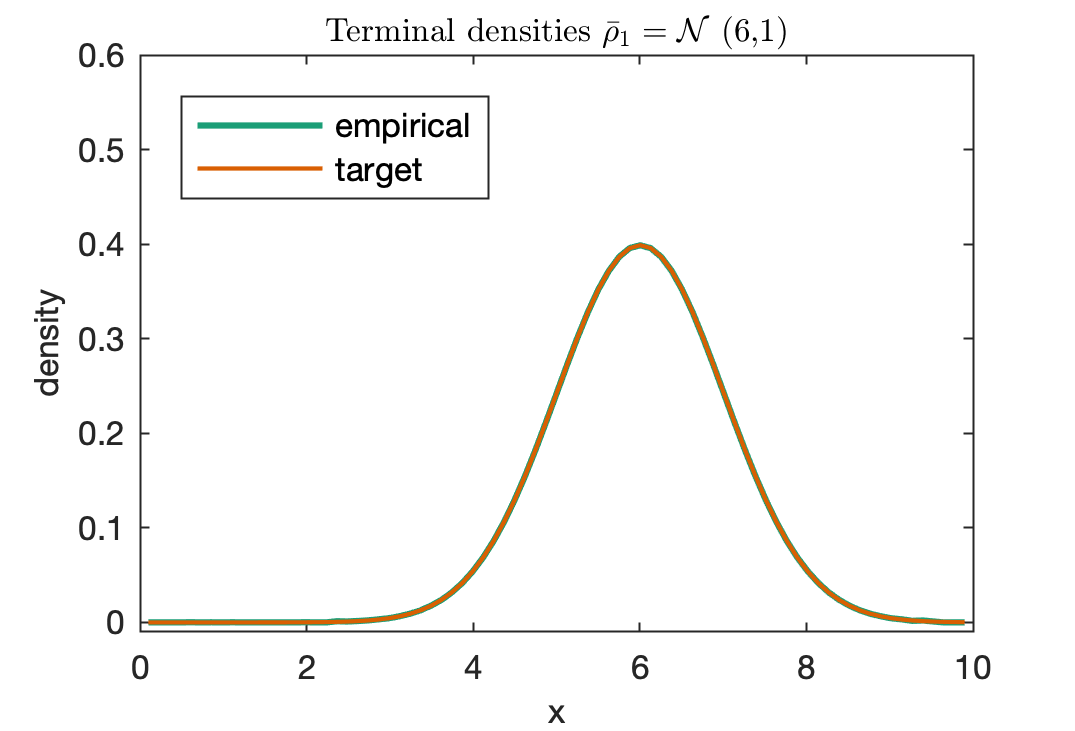}}
\qquad{}
\subfloat[optimal $\phi_{1}$ and $\phi_{0}$\label{fig:optimal-functions}]{\includegraphics[scale=0.7]{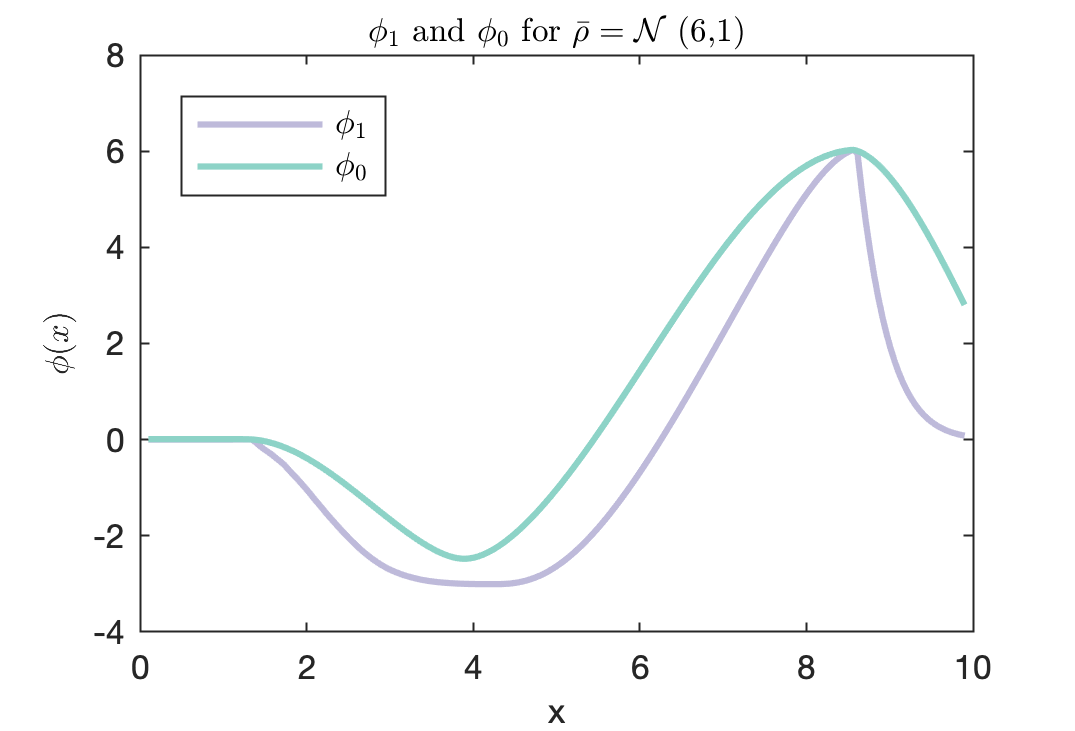}}
\caption{Attainable example: $\bar{\rho}_{1}=\mathcal{N}(6,1)$\label{fig:(6,1)}}
\end{figure}

Compared to other research where the prescribed distributions are
restricted to Gaussian, our method applies to a wide choice of $\bar{\rho}_{1}$,
such as heavy-tailed and asymmetric distributions. 
In Figure \ref{fig:mix_normal},
we illustrate an example where $\bar{\rho}_{1}$ is a mixture of
two Normal distributions, where 
\[
\bar{\rho}_{1}(x)=0.5\mathcal{N}(4,1)+0.5\mathcal{N}(7,1).
\]
In Figure \ref{fig:Distance_lambda}, we plot how the Euclidean distance
$\left( \int_{\mathbb{R}} (\rho_{1}-\bar{\rho}_{1})^{2}dx\right)^{\frac{1}{2}}$
changes with respect to $\lambda$ . As we increase the intensity
parameter $\lambda$, the Euclidean distance between $\rho_{1}$
and $\bar{\rho}_{1}$ decreases. As $\lambda$ goes to infinity, the
distance asymptotically goes to zero.
\begin{figure}[h]
\centering
\begin{minipage}{.5\textwidth}
  \centering
  \includegraphics[scale=0.7]{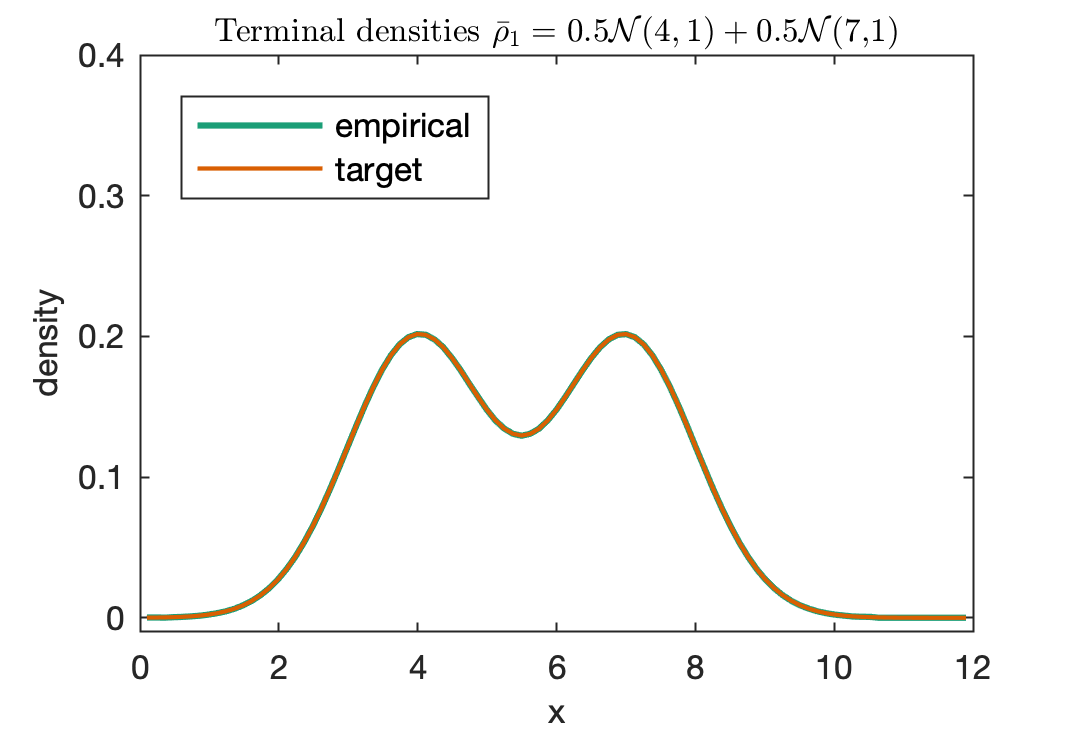} 
  \captionof{figure}{mixture of Normal distributions}
  \label{fig:mix_normal}
\end{minipage}%
\begin{minipage}{.5\textwidth}
  \centering
  \includegraphics[scale=0.7]{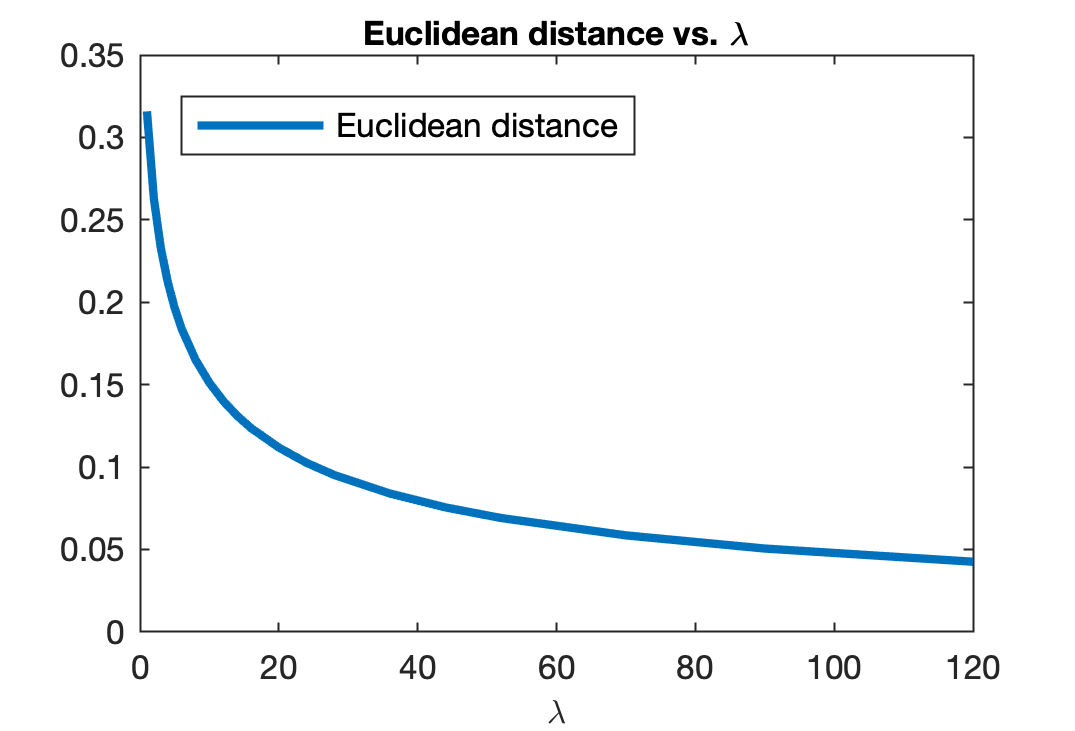}
  \captionof{figure}{Distance metric vs. $\lambda$}
  \label{fig:Distance_lambda}
\end{minipage}
\end{figure}

The Kullback--Leibler (K--L) divergence  introduced in \citet{kullback1951information},
is also known as relative entropy or information deviation. 
It measures the divergence of the distribution $\rho_{1}$ from
the target $\bar{\rho}_{1}$, the more similar the two distributions
are, the smaller the relative entropy will be. This measurement is
widely used in Machine Learning to compare two densities because it
has the following advantages: $1)$ this function is non-negative;
$2)$ for a fixed distribution $\bar{\rho}_{1}$, $C(\rho_{1},\bar{\rho}_{1})$
is convex in $\rho_{1}$; $3)$ $C(\rho_{1},\bar{\rho}_{1})=0$
if and only if $\rho_{1}=\bar{\rho}_{1}$ everywhere. There
are also caveats to the implementation of this penalty function. We
may face \textit{$0\log0$ }or \textit{division by zero }cases in
practice; to address this, we can replace zero with an infinitesimal positive
value.

In this case, the penalty functional is defined as 
\begin{equation}
C(\rho_{1},\bar{\rho}_{1})=\int_{\mathbb{R}} \lambda \rho_{1}(x)\ln\left(\frac{ \rho_{1}(x)}{\bar{\rho}_{1}(x)}\right)dx,
\label{KL}
\end{equation}
and the dual problem \eqref{eq:dual with phi_1} can be expressed explicitly as 
\begin{equation*}
V(\rho_{0},\bar{\rho}_{1}) =\sup_{\phi_{1}} \left\{ -\int_{\mathbb{R}} \lambda \exp\left(-\frac{\phi_1}{\lambda}-1\right)\bar{\rho}_1
-\phi_{0} \rho_{0} dx\right\} .
\end{equation*}
In Figure \ref{fig:KL_divergence}, we compare the empirical  terminal density $\rho_1$ and the target $\bar{\rho}_1$ when $C(\rho_{1},\bar{\rho}_{1})$ is defined by \eqref{KL} and $F(\tilde{A},\tilde{B})=(\tilde{A}-0.2)^2 + (\tilde{B}-0.2)^2$. The initial wealth $x_0 = 5$ and we set $\lambda = 0.1$ in Figure \ref{fig:KL_5.4_0.6_la01} and $\lambda = 10$ in Figure \ref{fig:KL_5.4_0.6_la10}.
\begin{figure}[h]
    \centering
    \subfloat[$\bar{\rho}_{1}=\mathcal{N}(5.4, 0.6), \lambda = 0.1$\label{fig:KL_5.4_0.6_la01}]{{\includegraphics[scale=0.7]{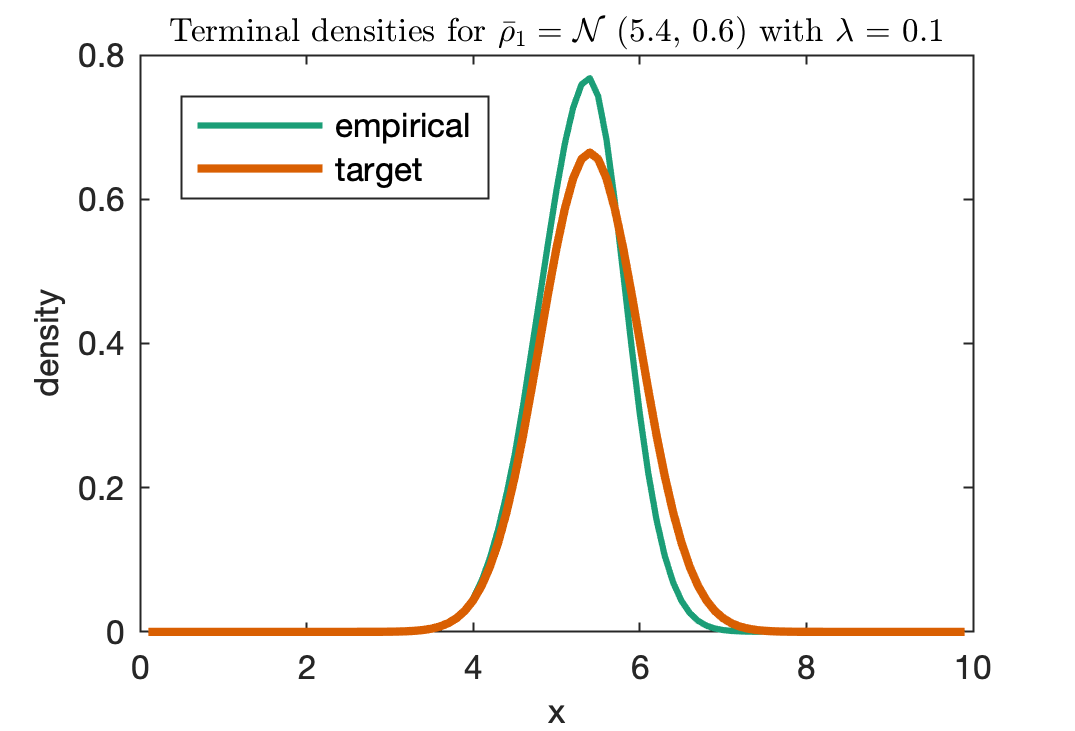} }}%
    \qquad \quad
    \subfloat[$\bar{\rho}_{1}=\mathcal{N}(5.4, 0.6), \lambda = 10$\label{fig:KL_5.4_0.6_la10}]{{\includegraphics[scale=0.7]{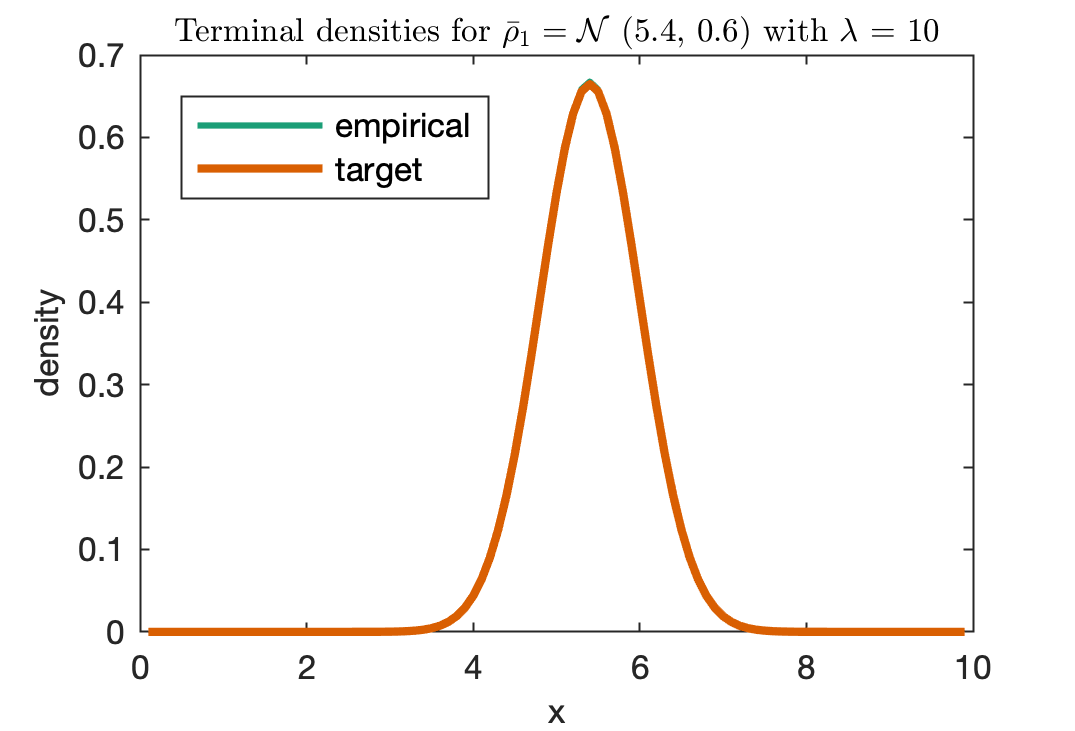} }}%
    \caption{K--L divergence as the penalty functional}%
    \label{fig:KL_divergence}%
\end{figure}

\subsection{Distribution of the wealth with cash saving \label{sec: consumption_part}}

In this section, we consider the cash saving during the investment process.
From previous parts, we have the constraint $(\tilde{B}^+)^{2}\leq\left\Vert \nu_t\right\Vert ^{2}\tilde{A}$.
However, when the prescribed target
$\bar{\rho}_{1}$ is not ambitious enough, we will find the optimal
drift $\tilde{B}^*$ is not saturated, i.e., $({\tilde{B}}^{*+})^2<\left\Vert \nu_t\right\Vert ^{2}\tilde{A}^*$ in \eqref{eq: optimal_A_B}.
In this case, we can actually attain a more ambitious distribution and have an accumulated cash saving $\int_{0}^{1}\left\Vert \nu_t\right\Vert \sqrt{\tilde{A}^*}-\tilde{B}^* dt$ during the investment process. 
Our goal in this section is to show that we can reach a better terminal distribution, in the sense that the terminal wealth has a higher expected value, when we take cash saving into account.

Denote $\left(C_{t}\right)_{t\in[0,1]}$ the accumulated cash saving
up to time $t$, and the evolution of $C_{t}$ is 
\begin{alignat*}{1}
dC_{t} & =\left(\left\Vert \nu_t\right\Vert \sqrt{\tilde{A}^*}-\tilde{B}^*\right)dt,\\
C_{0} & =0.
\end{alignat*}
If we add up the cash saving $C_{t}$ and the portfolio wealth $X_{t}$,
we can get a new process \textit{wealth with cash saving}.
Define $X^c_{t} \coloneqq X_{t}+C_{t}$, it is obvious to see that $X^c_{t}$ follows
the dynamics 
\begin{alignat*}{1}
dX^c_{t} & =\left\Vert \nu_t\right\Vert \sqrt{\tilde{A}^*}dt+\tilde{A}^{*^{\frac{1}{2}}} dW_{t},\\
X^c_{0} & =x_{0}.
\end{alignat*}
Denote by $p(t,x) \in \mathcal{P}$ the distribution of $X^c_{t}$ at time $t$, then
$p(t,x)$ satisfies the following Fokker-Planck equation 
\begin{alignat*}{1}
\partial_{t}p+\partial_{x}\left(\left\Vert \nu_t\right\Vert \sqrt{\tilde{A}^*}p\right)-\frac{1}{2}\partial_{xx}\left(\tilde{A}^*p\right) & =0,\\
p_0(x) & =\delta(x - x_{0}).
\end{alignat*}
Therefore, after solving for the optimal $\tilde{A}^{*}$, $\tilde{B}^{*}$
over time, we can find the densities of $X_{t}$ as well as $X^c_{t}$.
We keep using the squared Euclidean distance as the penalty functional
and $F(\tilde{A})=(\tilde{A}-0.2)^2 + (\tilde{B}-0.2)^2$ as the cost function.
Figure \ref{fig:With-consumption-allowed} compares the densities
for $X_{1}$ (terminal wealth), $X^c_{1}$ (terminal \textit{wealth with cash saving}) and the prescribed
target density. 
In Figure \ref{fig:consumption_5.1_0.5}, with a
rather conservative target $\bar{\rho}_{1}=\mathcal{N}(5.1,0.5)$,
although $\rho_{1}$ has attained the target, the distribution
for the \textit{wealth with cash saving} gathers at a higher value. When
we set a higher target $\bar{\rho}_{1} = \mathcal{N}(6,1)$, as in Figure \ref{fig:consumption_6_1},
we see there is no cash saved in the process since the paths for
$\rho_{1}$ and $p_{1}$ overlapped.
\begin{figure}[h]
    \centering
    \subfloat[$\bar{\rho}_{1}=\mathcal{N}(5.1,0.4)$\label{fig:consumption_5.1_0.5}]{{\includegraphics[scale=0.75]{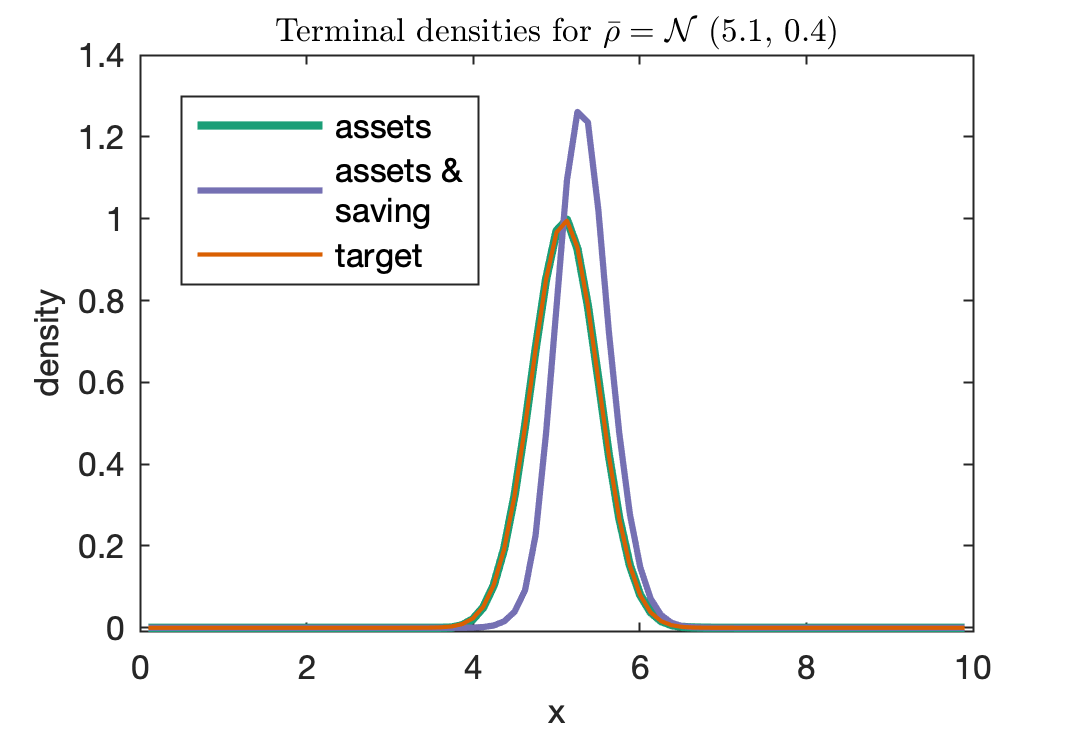} }}%
    \qquad \quad
    \subfloat[$\bar{\rho}_{1}=\mathcal{N}(6,1)$\label{fig:consumption_6_1}]{{\includegraphics[scale=0.75]{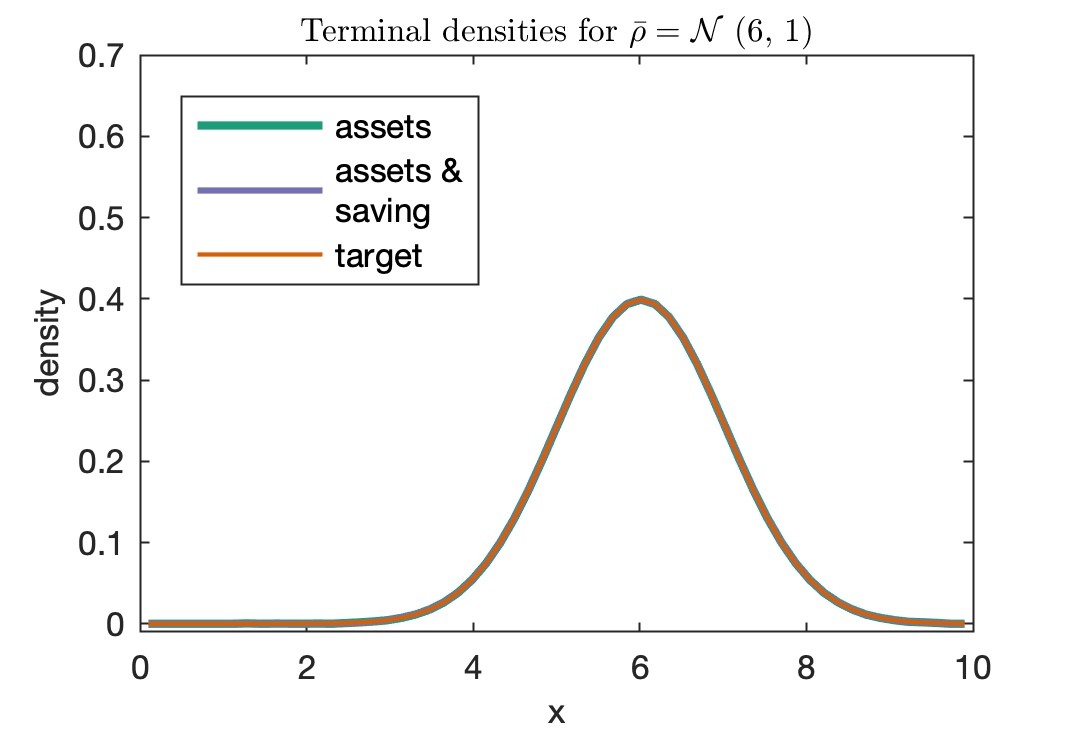} }}%
    \caption{Compare terminal distributions with or without cash saving}%
    \label{fig:With-consumption-allowed}%
\end{figure}

\subsection{Distribution of the wealth with cash input \label{sec: cash_input_part}}

As stated in Proposition \ref{prop:The-necessary-and-sufficient},
we always have $(\tilde{B}^+)^{2}\leq\left\Vert \nu_t\right\Vert ^{2}\tilde{A}$
for a self-financing portfolio. However, in this section, we remove the constraint $(\tilde{B}^+)^{2}\leq\left\Vert \nu_t\right\Vert ^{2}\tilde{A}$,
and we allow $\tilde{B}\in\mathbb{R}$ instead. 
Then the part $\left(\tilde{B}-\left\Vert \nu_t\right\Vert \sqrt{\tilde{A}}\right)^{+}$
can be interpreted as the extra cash we invest during the process. 
In this case, theoretically, we can attain any prescribed target distribution as we want
(see \citealt[Remark 2.3]{tan2013optimal}).
For the $\bar{\rho}_{1}$ which is unattainable by the self-financing portfolio, we can now attain
it with the help of cash input. However, to limit the use of cash, we design a cost function as follows,
\begin{alignat}{1}
F(\tilde{A},\tilde{B})=\begin{cases}
K(\tilde{B}^{2}-\left\Vert \nu_t\right\Vert ^{2}\tilde{A})+w\tilde{A}^{2}, & \quad\forall\tilde{B}>\left\Vert \nu_t\right\Vert \sqrt{\tilde{A}},\\
w\tilde{A}^{2}, & \quad\text{\ensuremath{\forall}}0\leq\tilde{B}\leq\left\Vert \nu_t\right\Vert \sqrt{\tilde{A}},\\
l\tilde{B}^{2}+w\tilde{A}^{2}, & \quad\forall\tilde{B}<0,
\end{cases}\label{eq:cost function penalizing}
\end{alignat}
where $K,w,l$ are positive constants. In the cost function (\ref{eq:cost function penalizing}),
we use the term $K(\tilde{B}^{2}-\left\Vert \nu_t\right\Vert ^{2}\tilde{A})$
to penalize the part $\left(\tilde{B}-\left\Vert \nu_t\right\Vert \sqrt{\tilde{A}}\right)^{+}$.
By varying $K$, we can control the strength of penalty and hence control the cash input flow. 
When $K$ is small, we are allowed to put in cash without being penalized excessively. 
When $K$ is large, we have to pay a high price for the cash input; consequently, the usage
is limited. The terms $w\tilde{A}^{2}$ and $l\tilde{B}^{2}$ add
coercivity to the function to ensure the existence of the solution,
we set $w,l$ to be small positive real values.

With the optimal drift $\tilde{B}^{*}\in \mathbb{R}$ and diffusion $\tilde{A}^{*} \in \mathbb{R}^+$, the dynamics of the wealth $X_t$ is
\begin{alignat*}{1}
dX_{t} & =\tilde{B}^*dt+\sqrt{\tilde{A}^*}dW_{t},\\
X_{0} & =x_{0}.
\end{alignat*}
If there is no cash input, the maximum drift is $\left\Vert \nu_t\right\Vert \sqrt{\tilde{A}}$.
Denote $\left(I_{t}\right)_{t\in[0,1]}$ the accumulated cash input
up to time $t$, and $I_{t}$ follows
the dynamics 
\begin{alignat*}{1}
dI_{t} & =\left(\tilde{B}^*-\left\Vert \nu_t\right\Vert \sqrt{\tilde{A}^*}\right)^{+}dt,\\
I_{0} & =0.
\end{alignat*}
Define $X^I_{t}\coloneqq X_{t}-I_{t}$ as the \textit{path without the cash
input}. Then the dynamics of $X^I_{t}$ is 
\begin{alignat*}{1}
dX^I_{t} & =\min\left(\tilde{B}^*,\left\Vert \nu_t\right\Vert \sqrt{\tilde{A}^*}\right)dt+\sqrt{\tilde{A}^*}dW_{t},\\
X^I_{0} & =x_{0}.
\end{alignat*}
Let the density of $X^I_{t}$ be $q(t,x) \in \mathcal{P}$, then $q(t,x)$ follows the
following Fokker-Planck equation 
\begin{alignat*}{1}
\partial_{t}q+\partial_{x}\left[\min\left(\tilde{B}^*,\left\Vert \nu_t\right\Vert \sqrt{\tilde{A}^*}\right)q\right]-\frac{1}{2}\partial_{xx}\left(\tilde{A}^*q\right) & =0,\\
q_0(x) & =\delta(x - x_{0}).
\end{alignat*}
Finally, we can see the effect of cash input by comparing $\rho_{1}(x)$ and $q_{1}(x)$.

\subsubsection{Attainable target}

In the first example, we aim at the terminal distribution $\bar{\rho}_{1}=\mathcal{N}(6,1)$, 
which is attainable by the self-financing portfolio.
We use the squared Euclidean distance as the penalty functional and
equation \eqref{eq:cost function penalizing} as the cost function.
Figure \ref{fig:cash input for 6_1} demonstrates the time evolution
of $q(t,x)$ (assets) and $\rho(t,x)$ (assets and cash input), and it compares
$q_{1}(x),\rho_{1}(x)$ and $\bar{\rho}_{1}(x)$ for various
$K$ values. 
At the beginning, we set the coefficient $K=0.5$ in Figure \ref{fig:cash input 6_1 K0.5}.
There is a clear difference between the paths for \textit{assets} and \textit{assets
and cash}, which means we have input a significant amount of cash over time. 
As we increase the value of $K$ in Figure \ref{fig:cash input 6_1 K4}, the difference
between $q(t,x)$ and $\rho(t,x)$ becomes less obvious. When
$K=6$, the paths with or without cash input coincide in Figure \ref{fig:cash input 6_1 K6}
because the high cost has prevented the cash input in this context.
Since the target $\mathcal{N}(6,1)$ is attainable, we can still reach
it even without cash input, as shown in the second plot of \ref{fig:cash input 6_1 K6}.

\begin{figure}
\subfloat[$K=0.5$\label{fig:cash input 6_1 K0.5}]{\includegraphics[scale=0.7]{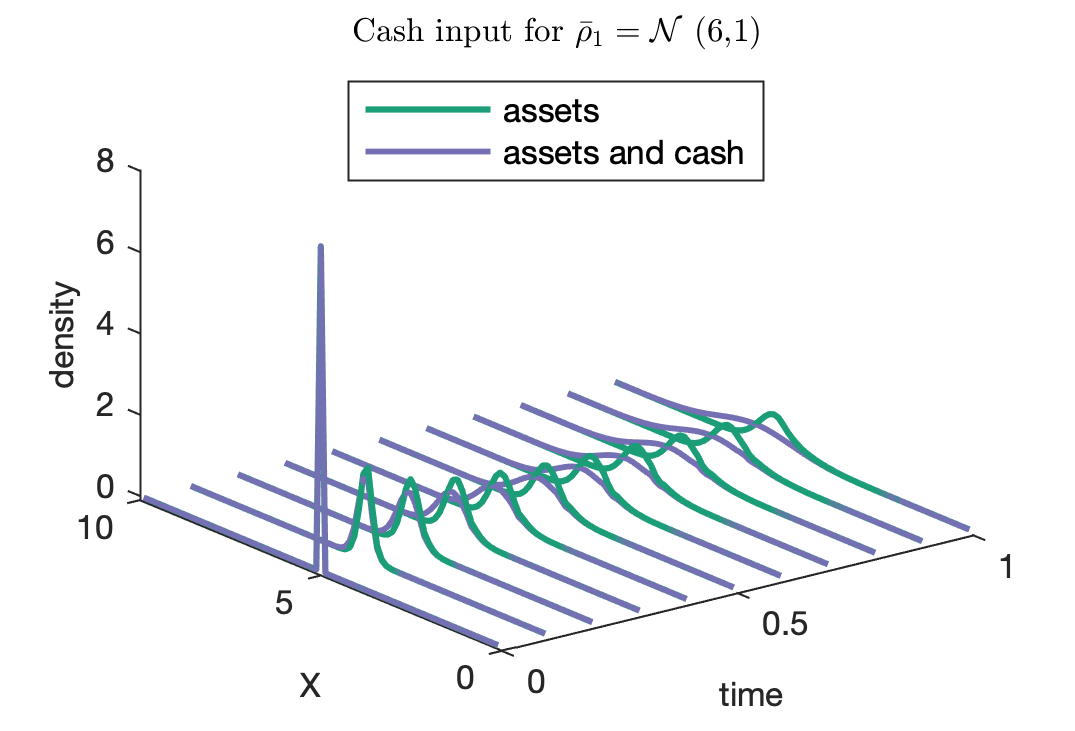} \includegraphics[scale=0.7]{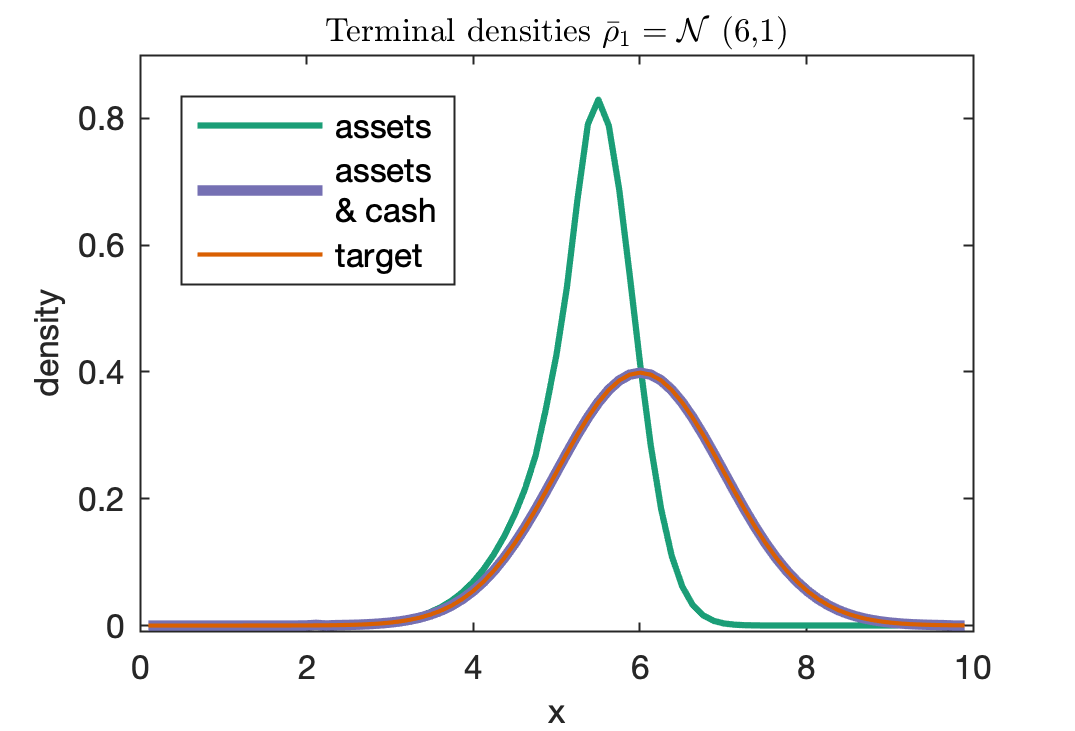}

}

\subfloat[$K=4$\label{fig:cash input 6_1 K4}]{\includegraphics[scale=0.7]{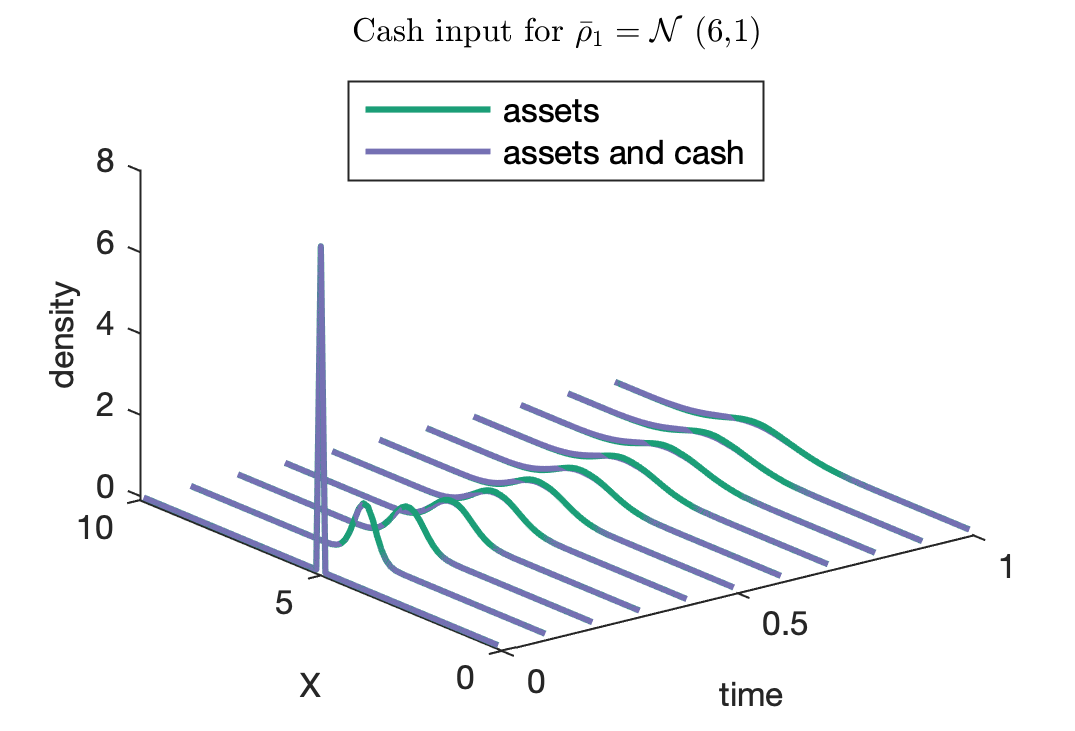} \includegraphics[scale=0.7]{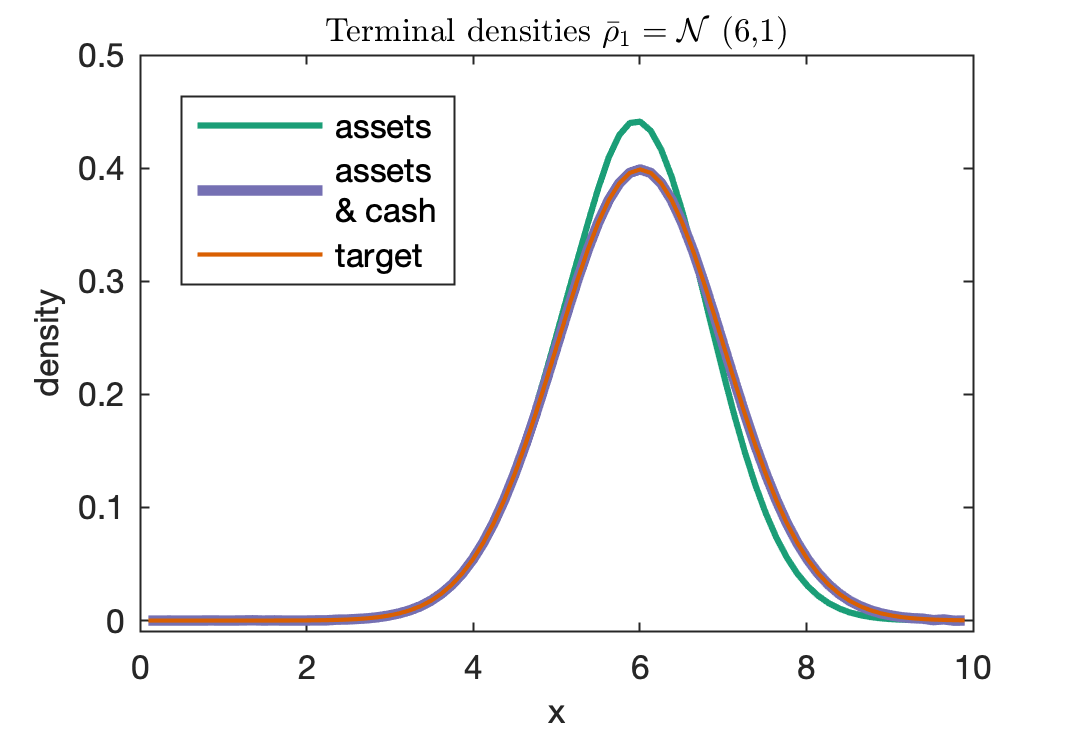}

}

\subfloat[$K=6$\label{fig:cash input 6_1 K6}]{\includegraphics[scale=0.7]{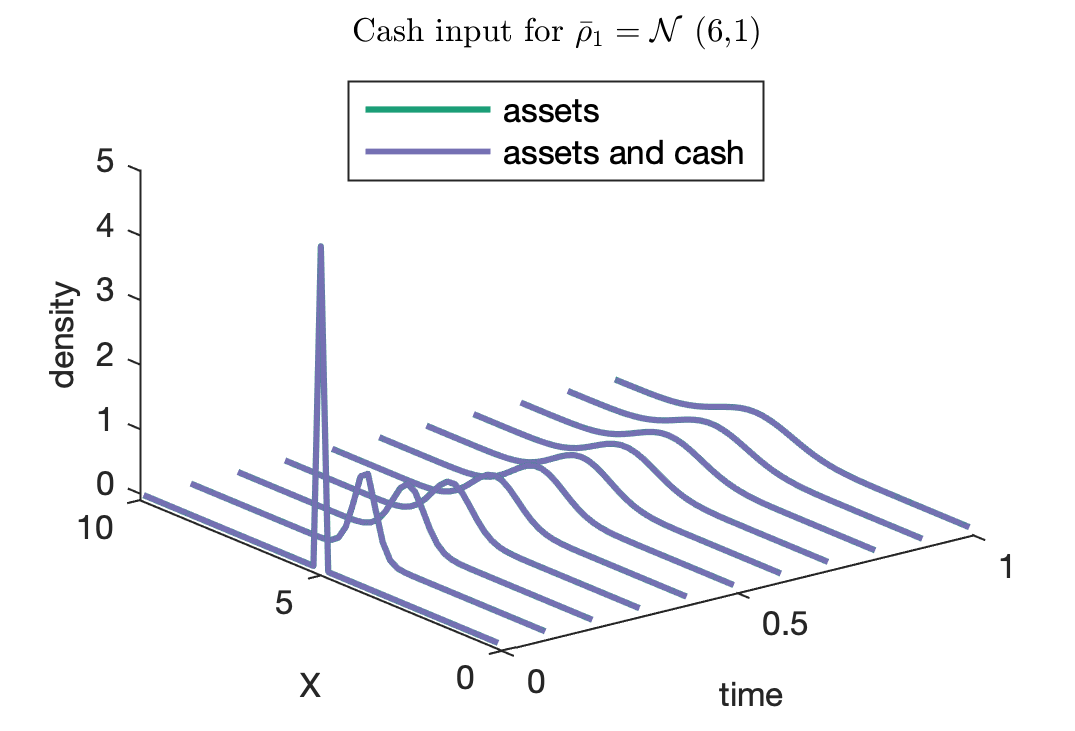} \includegraphics[scale=0.7]{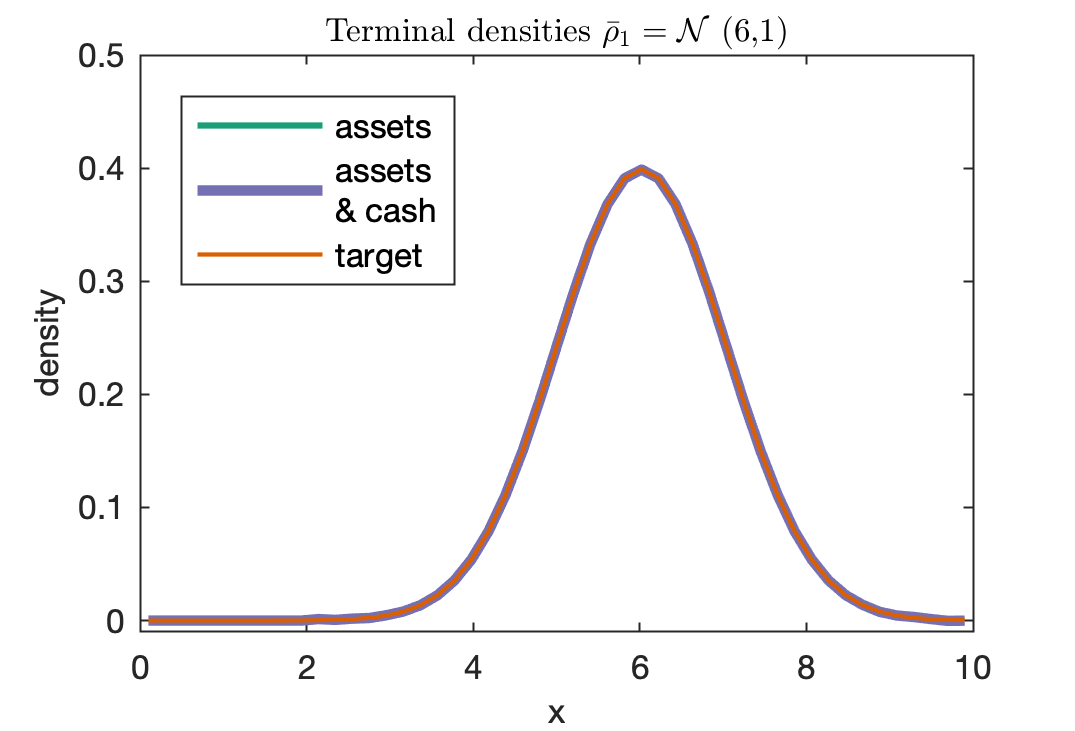}

}

\caption{Fixed $K$ for an attainable target: $\bar{\rho}_{1}=\mathcal{N}(6,1)$\label{fig:cash input for 6_1}}
\end{figure}

\subsubsection{Unattainable target}

To see the effect of cash input, here we demonstrate an example with
an unattainable target.  
For instance, we may target at a terminal distribution with
no left tail but a heavy right tail, in other words, there is very little risk
for the wealth to fall below some level. Therefore, we set $\bar{\rho}_{1}=Weibull\,(6,2)$
in Figure \ref{fig:K_t-0.8-Weibull}, where $P(x<4)$ is almost
zero. 
In this example, the coefficient $K$ in \eqref{eq:cost function penalizing} is not a constant anymore. 
Instead, we let $K(t):[0,1]\rightarrow\mathbb{R}^{+}$
be a function of time so that we can control the cash input flow over
time.
We define $K(t)=5$ for $t\in[0,0.8]$ and $K(t)=0.1$ for $t\in[0.8,1]$.
In the time-evolution plot (the left one of Figure \ref{fig:K_t-0.8-Weibull}), we can see that the paths for 
\textit{assets} and \textit{assets and cash} start to differentiate from $t=0.8$. 
Similarly, we can see the same effect in Figure \ref{fig:K_t 0.95}, where we set $\bar{\rho}_{1}=\mathcal{N}(6.5,1)$
and we define $K(t)=5$ for $t\in[0,0.95]$ and $K(t)=0.1$ for $t\in[0.95,1]$.
In these two examples, the targets $Weibull\,(6,2)$
and $\mathcal{N}(6.5,1)$ are unattainable under the
constraint $(\tilde{B}^+)^{2}\leq\left\Vert \nu_t\right\Vert ^{2}\tilde{A}$.
However, we can make the empirical terminal density $\rho_{1}$ reach $\bar{\rho}_{1}$ by inputting cash wisely.

\begin{figure}
\subfloat[{{{$K=5\:\forall t\in[0,0.8],K=0.1\:\forall t\in[0.8,1]$ for Weibull
$(6,2)$\label{fig:K_t-0.8-Weibull}}}}]{\includegraphics[scale=0.7]{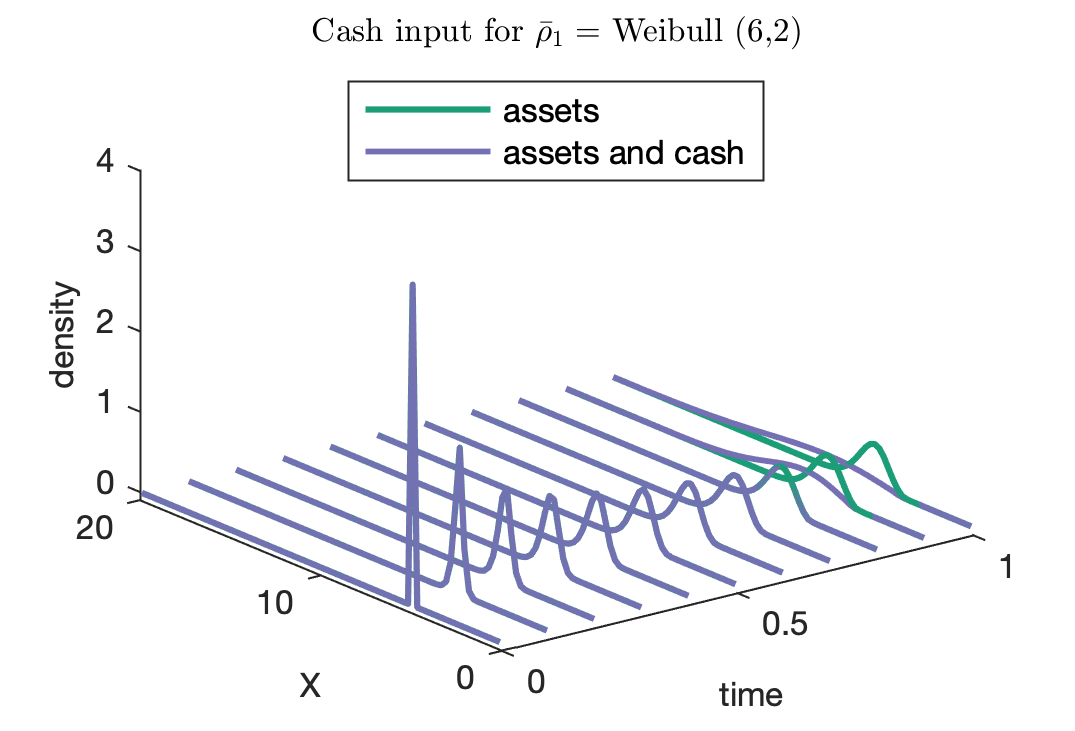}
\includegraphics[scale=0.7]{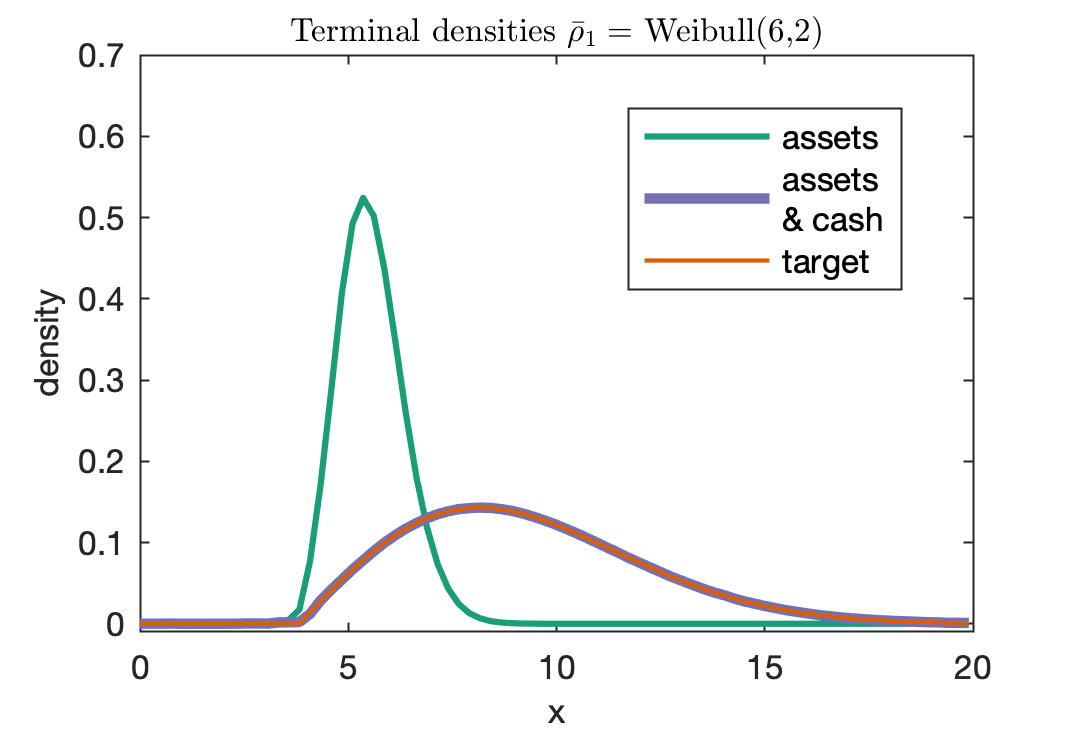}

}

\subfloat[{{{$K=5\:\forall t\in[0,0.95],K=0.1\:\forall t\in[0.95,1]$ for $\mathcal{N}(6.5,1)$\label{fig:K_t 0.95}}}}]{\includegraphics[scale=0.7]{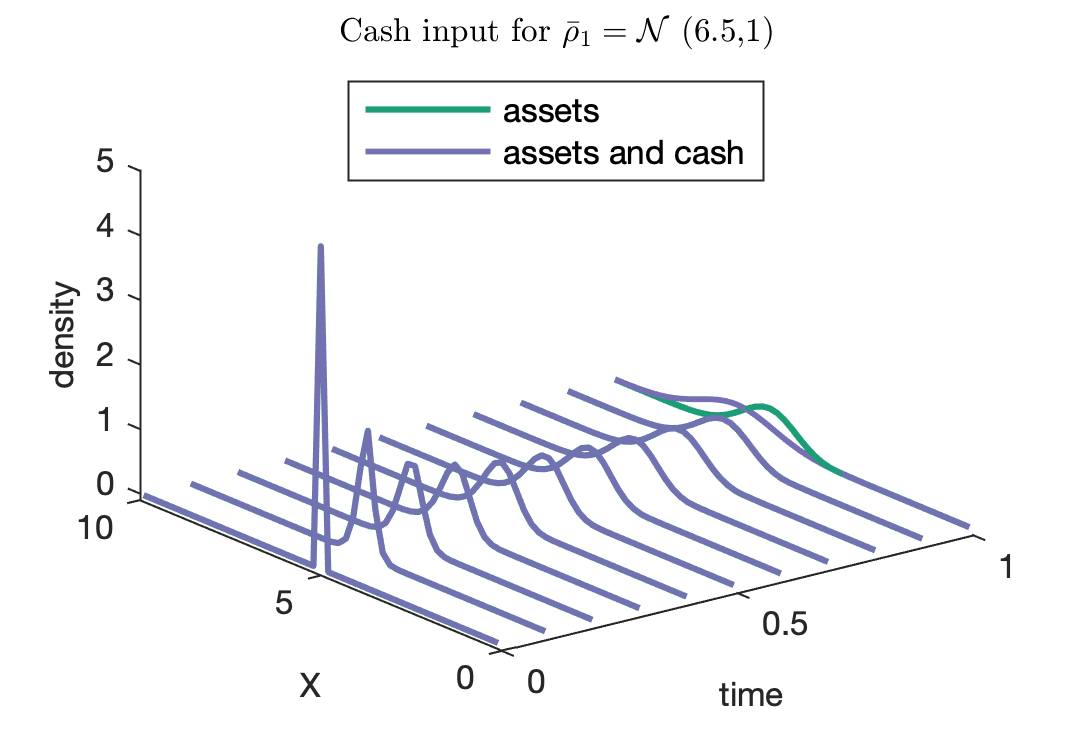}
\includegraphics[scale=0.7]{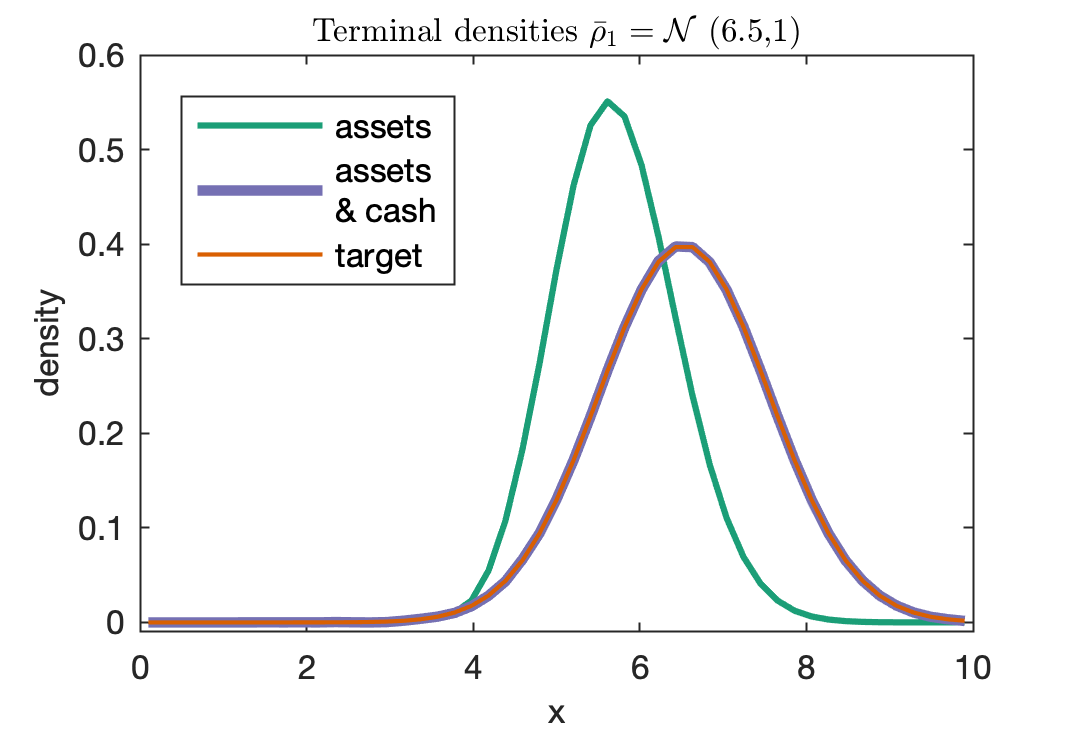}

}

\caption{$K(t)$ for unattainable targets}
\end{figure}

\section{Conclusion}
The ability to specify the whole distribution of final wealth of interest as portfolio optimization target gives a greater flexibility over classical objective functions such as expected utility or moment-based objectives such as the mean-variance framework and its extensions.
In this article, we construct a portfolio and the dynamics of the portfolio wealth is a semimartingale.
Starting from an initial wealth, by controlling the portfolio allocation process, 
we are able to steer the portfolio wealth to a prescribed distribution at the terminal time.
This problem is closely related to optimal mass transport (OMT). 
In the problem formulation, in addition to the conventional cost function in OMT, we also design a penalty functional to measure
the divergence of the empirical terminal density from the prescribed one. 
We take into consideration the possible consumption during the investment process, and show that we can actually 
reach a better terminal density when there is no consumption. 
When the target density is attainable, our problem can recover the classical OMT problem 
by choosing an indicator function as the penalty function. 
When the target terminal density is unattainable by the self-financing portfolio, 
we devise a strategy to reach it by allowing cash input during the investment process. 
We proved a duality result for the primal problem and solved it with a gradient descent
based algorithm. Our numerical results verify the accuracy and validity of this algorithm.

\bibliographystyle{chicago}
\bibliography{opt_pre_density}

\appendix
\section{Appendices}

\subsection{\label{sec:proof_AB_relation}}
\begin{proof}[Proof of Proposition \ref{prop:The-necessary-and-sufficient}] 
$\,$

Firstly, we prove the necessity. We can use eigen-decomposition and write
the covariance matrix $\Sigma_t=Q\Lambda Q^{^{\intercal}}=Q\Lambda^{\frac{1}{2}}\Lambda^{\frac{1}{2}}Q^{\intercal}$,
where $Q \in \mathbb{R}^{d \times d}$ and the $i$th column of $Q$ is
the eigenvector $q_{i}$ of $\Sigma_t$, and $\Lambda\in\mathbb{R}^{d\times d}$
is the diagonal matrix whose diagonal elements are the corresponding
eigenvalues, $\Lambda_{ii}=\lambda_{i}$. 

For any given $\alpha_{t} \in \mathbb{R}^d$, $A=\alpha_{t}^{\intercal}Q\Lambda^{\frac{1}{2}}\Lambda^{\frac{1}{2}}Q^{\intercal}\alpha_{t}x^{2}\rho$.
We define $\beta\coloneqq\alpha_{t}^{\intercal}Q\Lambda^{\frac{1}{2}}$,
then $A=\beta\beta^{\intercal}x^{2}\rho=\left\Vert \beta\right\Vert ^{2}x^{2}\rho$,
where $\left\Vert \cdot\right\Vert $ denotes the $L^{2}$ norm.
Similarly, $B=\alpha_{t}^{\intercal}Q\Lambda^{\frac{1}{2}}(Q\Lambda^{\frac{1}{2}})^{-1}\mu_t x\rho=\beta(Q\Lambda^{\frac{1}{2}})^{-1}\mu_t x\rho$.
Therefore we have the relationship between $A$ and $B$ as 
\[
B^{2}=\left(\beta(Q\Lambda^{\frac{1}{2}})^{-1}\mu_t\right)^{2}x^{2}\rho^{2} 
\leq \left\Vert \beta\right\Vert ^{2} \left\Vert (Q\Lambda^{\frac{1}{2}})^{-1}\mu_t \right\Vert ^{2}x^{2}\rho^{2}
=A\rho\left\Vert (Q\Lambda^{\frac{1}{2}})^{-1}\mu_t \right\Vert ^{2},
\]
\[
\left\Vert (Q\Lambda^{\frac{1}{2}})^{-1}\mu_t \right\Vert ^{2}A\geq\frac{B^{2}}{\rho}.
\]
Define $\nu_t \coloneqq(Q\Lambda^{\frac{1}{2}})^{-1}\mu_t = \Sigma_t^{-\frac{1}{2}}\mu_t$, we can write
the above inequality as $A\geq\frac{B^{2}}{\left\Vert \nu_t \right\Vert ^{2}\rho}$.

For given $(\rho, B, A)$ satisfying $A\geq\frac{B^{2}}{\left\Vert \nu_t \right\Vert ^{2}\rho}$,
we want to show that there exists $\alpha_{t}\in\mathbb{R}^{d}(d>1)$,
such that $A=\alpha_{t}^{\intercal}\Sigma_t \alpha_{t}x^{2}\rho$,
$B=\alpha_{t}^{\intercal}\mu_t x\rho$. 
First of all, since $\frac{A}{\rho} \geq 0$,
there exists a vector $\beta\in\mathbb{R}^{1\times d}$ whose
norm satisfies $\left\Vert \beta\right\Vert ^{2}=\frac{A}{x^{2}\rho}$.
Then $\frac{B^{2}}{\left\Vert \nu_t\right\Vert ^{2}\rho}\leq A$ will
be equivalent to $B^{2}\leq\left\Vert \beta\right\Vert ^{2}\left\Vert \nu_t \right\Vert ^{2}x^{2}\rho^{2}$.
With Cauchy--Schwarz inequality, 
$(\beta\nu_t)^{2}x^{2}\rho^{2}\leq\left\Vert \beta\right\Vert ^{2}\left\Vert \nu_t\right\Vert ^{2}x^{2}\rho^{2}$
holds. Therefore there exists a vector $\beta \in \mathbb{R}^{1\times d}(d>1)$ such that $B=\beta\nu_t x\rho$
and $\left\Vert \beta\right\Vert ^{2}=\frac{A}{x^{2}\rho}$. With
this $\beta$, there exists an $\alpha_{t}=Q\Lambda^{-\frac{1}{2}}\beta^{\intercal}$.

The case for dimension $d=1$ is trivial, hence omitted here.
\end{proof}

\subsection{\label{sec:extend_non_compact}}
\begin{prop}
We denote $K_{0}$ the set of $(u,b,a,r)\in C_{b}(\mathcal{E}, \mathbb{R}\times\mathbb{R}\times\mathbb{R}\times\mathbb{R})$ that  can be represented by $\phi \in C_b^{1,2}(\mathcal{E})$.
Then we have 
\[
\inf_{(\rho,B,A,\rho_1)\in C_{b}^{*}(\mathcal{E};\mathbb{R}\times\mathbb{R}\times\mathbb{R}\times\mathbb{R})}\left\{ \alpha^{*}(\rho,B,A,\rho_1)+\beta^{*}(\rho,B,A,\rho_1)\right\} 
=\inf_{(\rho,B,A,\rho_1)\in\mathcal{M}(\mathcal{E};\mathbb{R}\times\mathbb{R}\times\mathbb{R}\times\mathbb{R})}\left\{ \alpha^{*}(\rho,B,A,\rho_1)+\beta^{*}(\rho,B,A,\rho_1)\right\} .
\]
\end{prop}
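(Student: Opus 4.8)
The plan is to prove this equality in two directions. The inequality $\leq$ is immediate, since $\mathcal{M}(\mathcal{E};\mathbb{R}^{4})$ is contained in $C_b^*(\mathcal{E};\mathbb{R}^4)$, so the infimum over the larger set can only be smaller. The substance is in the reverse inequality $\geq$: I need to show that nothing is lost by restricting the dual variables from the full topological dual $C_b^*$ to the honest signed measures $\mathcal{M}$. The strategy is to argue that any element of $C_b^*$ attaining a finite value of $\alpha^*+\beta^*$ must in fact already lie in $\mathcal{M}$, or at least can be replaced by an element of $\mathcal{M}$ with no larger value of the objective.

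First I would recall the structure of $C_b^*$ on a non-compact Polish space: by the Yosida--Hewitt decomposition, any $\ell \in C_b(\mathcal{E};\mathbb{R})^*$ splits as $\ell = \ell_c + \ell_s$, where $\ell_c$ is a countably additive finite signed measure (an element of $\mathcal{M}$) and $\ell_s$ is a purely finitely additive part that is ``concentrated at infinity'' — it vanishes on every function in $C_0(\mathcal{E})$, and can be detected only by functions not decaying at infinity. The key observation is that the constraint functional $\beta^*$ forces $(\rho,B,A,\rho_1)$ to satisfy the weak Fokker--Planck relation \eqref{eq:Lagrangian penalty 1-1} tested against all $\phi \in C_b^{1,2}(\mathcal{E})$; for such $\phi$ the derivatives $\partial_t\phi, \partial_x\phi, \partial_{xx}\phi$ lie in $C_0$, so the singular parts of $B$ and $A$ do not interact with the constraint and are essentially unconstrained there — but then $\alpha^*$ (which through \eqref{eq:equivalent integrant} effectively forces $B = \tilde B\rho$, $A = \tilde A \rho$ with $\rho \geq 0$ a genuine measure, using the coercivity Assumption~\ref{assu:finiteness of F}(ii)) penalizes any singular mass in $B$ or $A$ by $+\infty$, exactly as in the argument already given in the proof of Theorem~\ref{thm:Duality_theorem} for the ``$B,A$ not absolutely continuous w.r.t.\ $\rho$'' case. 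So I would show: if $\alpha^*(\rho,B,A,\rho_1) < \infty$ then $\rho \in \mathcal{M}_+$, $B,A \ll \rho$, hence $B,A \in \mathcal{M}$; and if additionally $\beta^*(\rho,B,A,\rho_1) < \infty$ then $\rho_1 \in \mathcal{M}$ is the time-1 marginal of $\rho$. Thus any point of $C_b^*$ with finite objective already belongs to $\mathcal{M}$, giving $\geq$.

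Concretely, the steps in order: (1) observe $\leq$ trivially; (2) reduce $\geq$ to showing finiteness of $\alpha^*+\beta^*$ at a point of $C_b^*$ implies that point lies in $\mathcal{M}$; (3) handle the $\rho$-component: if the countably-additive part of $\rho$ is not nonnegative, or $\rho$ has a nontrivial singular-at-infinity part of either sign, exhibit test functions $u = -\lambda \mathbbm{1}_O$ (or $u$ supported near infinity) driving $\alpha^*$ to $+\infty$, exactly mirroring the displayed argument after \eqref{eq: alpha_star_def}; (4) handle $B,A$: show finiteness of $\alpha^*$ forces $B = \tilde B \rho$, $A = \tilde A \rho$ for measurable $\tilde B, \tilde A$ with $\int |F(\tilde B,\tilde A)|\,d\rho < \infty$, so $B,A$ are countably additive and absolutely continuous w.r.t.\ $\rho$, using that pairing against $b,a$ not vanishing at infinity produces $+\infty$; (5) handle $\rho_1$: $\beta^*$ finite forces $\rho_1$ to equal the trace of $\rho$ at $t=1$, hence $\rho_1 \in \mathcal{M}(\mathbb{R})$; (6) conclude the two infima agree.

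The main obstacle will be step (3)--(4) done rigorously for the \emph{finitely additive / singular-at-infinity} parts rather than just for negative mass on a Borel set: a purely finitely additive functional cannot be ``localized'' to a set $O$ in the naive way, so the test functions must instead be chosen to escape to infinity (e.g.\ $u_R$ equal to a constant outside a ball of radius $R$, then $R \to \infty$), and one must check that such $u_R$ still lie in $C_b$ and still pair with the singular part in a controlled way while the coercivity bound in Assumption~\ref{assu:finiteness of F}(ii) keeps the ``good'' part of the integral finite. This is precisely the technical point where non-compactness of $\mathcal{E}$ bites, and it is the reason the proposition is stated separately rather than folded into Theorem~\ref{thm:Duality_theorem}; an alternative, possibly cleaner route is to invoke a known tightness/compactification argument (à la the treatment in \citet{guo2019calibration}) showing the singular part carries no mass once the coercive cost is finite, but either way this is the crux.
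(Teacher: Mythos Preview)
Your fallback strategy---replace $(\rho,B,A,\rho_1)\in C_b^*$ by its countably-additive part $(\hat\rho,\hat B,\hat A,\hat\rho_1)\in\mathcal{M}$ and show the objective does not increase---is exactly what the paper does, and it works cleanly. Your \emph{primary} strategy, however, namely to show that finiteness of $\alpha^*+\beta^*$ forces the element to already lie in $\mathcal{M}$, has a gap at step~(3). You propose to drive $\alpha^*$ to $+\infty$ whenever $\rho$ carries a nontrivial singular part ``of either sign'', but for a \emph{positive} purely finitely additive piece $\delta\rho$ this does not work: the constraint $u+F^*(b,a)\le 0$ together with $F^*(b,a)\ge -F(0,0)>-\infty$ gives a uniform upper bound on $u$, so $\langle u,\delta\rho\rangle$ is bounded above and cannot be pushed to $+\infty$. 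Thus finiteness of $\alpha^*$ alone does \emph{not} exclude a positive singular remainder in $\rho$, and your reduction in step~(2) fails as stated.

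The paper sidesteps this entirely by taking the projection route from the outset. Writing $(\rho,B,A,\rho_1)=(\hat\rho,\hat B,\hat A,\hat\rho_1)+(\delta\rho,\delta B,\delta A,\delta\rho_1)$ with the $\delta$-parts vanishing on $C_0$, it simply restricts the supremum defining $\alpha^*$ to test functions $(u,b,a,r)\in C_0$: the sup over a smaller set is no larger, and on $C_0$ the singular parts disappear, yielding $\alpha^*(\rho,B,A,\rho_1)\ge\alpha^*(\hat\rho,\hat B,\hat A,\hat\rho_1)$ in one line. The same trick handles $\beta^*$, since for $\phi\in C_b^{1,2}$ the derivatives $\partial_t\phi,\partial_x\phi,\partial_{xx}\phi$ lie in $C_0$ by definition. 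No coercivity, no test functions escaping to infinity, no case analysis on the sign of the singular part. Your Yosida--Hewitt decomposition and your observation about the derivatives of $\phi$ lying in $C_0$ are precisely the right ingredients; you should feed them into the monotonicity-under-projection argument rather than the harder (and here false) claim that the singular part must vanish outright.
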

\begin{proof}
Following closely the argument in \citet[Section 1.3]{villani2003topics},
we define $C_{0}(\mathcal{E})$ the space of continuous functions on $\mathcal{E}$, going to $0$
at infinity. For $(\rho,B,A,\rho_1)\in C_{b}^{*}(\mathcal{E};\mathbb{R}\times\mathbb{R}\times\mathbb{R}\times\mathbb{R})$, we decompose $(\rho,B,A,\rho_1)=(\hat{\rho},\hat{B}, \hat{A}, \hat{\rho}_1)+(\delta\rho,\delta B,\delta A, \delta \rho_1)$,
where $(\hat{\rho},\hat{B}, \hat{A}, \hat{\rho}_1) \in \mathcal{M}(\mathcal{E};\mathbb{R}\times\mathbb{R}\times\mathbb{R}\times\mathbb{R})$. For any $(u,b,a,r)\in C_{0}(\mathcal{E}; \mathbb{R} \times \mathbb{R}\times \mathbb{R}\times \mathbb{R})$,
we have $\langle (u,b,a,r),(\delta\rho,\delta B,\delta A, \delta \rho_1) \rangle = 0$.

Because $\mathcal{M}(\mathcal{E}; \mathbb{R})$ is a subset of $C_{b}^{*}(\mathcal{E}; \mathbb{R})$, we naturally have 
\begin{equation}
\inf_{(\rho,B,A,\rho_1)\in C_{b}^{*}(\mathcal{E};\mathbb{R}\times\mathbb{R}\times\mathbb{R}\times\mathbb{R})}\left\{ \alpha^{*}(\rho,B,A,\rho_1)+\beta^{*}(\rho,B,A,\rho_1)\right\} 
\leq\inf_{(\rho,B,A,\rho_1)\in\mathcal{M}(\mathcal{E};\mathbb{R}\times\mathbb{R}\times\mathbb{R}\times\mathbb{R})}\left\{ \alpha^{*}(\rho,B,A,\rho_1)+\beta^{*}(\rho,B,A,\rho_1)\right\} .\label{eq:inequality_1}
\end{equation}
\begin{comment}
Notice that the right hand side of the inequality (\ref{eq:inequality_1})
can be written as 
$\inf_{(\hat{\rho},\hat{B}, \hat{A}, \hat{\rho}_1)\in\mathcal{M}}\left\{ \alpha^{*}(\hat{\rho},\hat{B}, \hat{A}, \hat{\rho}_1)+\beta^{*}(\hat{\rho},\hat{B}, \hat{A}, \hat{\rho}_1)\right\} $.
\end{comment}
Now we look at the opposite direction of inequality (\ref{eq:inequality_1}).
For $\alpha^{*}$, we have 
\begin{align*}
\alpha^{*}(\rho, B, A, \rho_1) 
& =\sup_{(u,b,a,r)\in C_{b}(\mathcal{E}; \mathbb{R}\times\mathbb{R}\times\mathbb{R}\times\mathbb{R})}\Bigl\{ \int_{\mathcal{E}}ud\rho+bdB+adA+\Bigl[\int_{\mathbb{R}}rd\rho_{1}-C^{*}(r)\Bigr]: u+F^{*}(b,a)\leq0 \Bigr\} \\
 & \geq \sup_{(u,b,a,r)\in C_{0}(\mathcal{E}; \mathbb{R}\times\mathbb{R}\times\mathbb{R}\times\mathbb{R})}\Bigl\{ \int_{\mathcal{E}}ud\rho+bdB+adA+\Bigl[\int_{\mathbb{R}}rd\rho_{1}-C^{*}(r)\Bigr]: u+F^{*}(b,a)\leq0 \Bigr\} \\
 & = \sup_{(u,b,a,r)\in C_{0}(\mathcal{E}; \mathbb{R}\times\mathbb{R}\times\mathbb{R}\times\mathbb{R})}\Bigl\{ \int_{\mathcal{E}}
 ud\hat{\rho}+bd\hat{B}+ad\hat{A}+\Bigl[\int_{\mathbb{R}}rd \hat{\rho}_{1}-C^{*}(r)\Bigr]: u+F^{*}(b,a)\leq0 \Bigr\} \\
 & =\alpha^{*}( \hat{\rho},\hat{B},\hat{A},\hat{\rho}_1).
\end{align*}
We know $\beta^{*}(\hat{\rho},\hat{B},\hat{A},\hat{\rho}_1)=0$ if $(\hat{\rho},\hat{B},\hat{A},\hat{\rho}_1)$
satisfies (\ref{eq:Lagrangian penalty 1-1}), and $\beta^{*}(\hat{\rho},\hat{B},\hat{A},\hat{\rho}_1)=+\infty$
otherwise. When $\beta^*$ is finite, 
$$
\int_{\mathcal{E}}ud\hat{\rho}+bd\hat{B}+ad\hat{A}+\int_{\mathbb{R}}rd\hat{\rho}_{1}-\phi_{0}d\rho_{0} = 0 \qquad \forall (u, b, a, r)\in K_{0}.
$$
Then we have
\begin{align*}
\beta^{*}(\hat{\rho},\hat{B},\hat{A},\hat{\rho}_1) 
& = \sup_{(u, b, a, r)\in C_{0}\cap K_{0}}\int_{\mathcal{E}}ud\hat{\rho}+bd\hat{B}+ad\hat{A}+\int_{\mathbb{R}}rd\hat{\rho}_{1}-\phi_{0}d\rho_{0}\\
& = \sup_{(u, b, a, r)\in C_{0}\cap K_{0}}\int_{\mathcal{E}}ud\rho+bdB+adA+\int_{\mathbb{R}}rd\rho_{1}-\phi_{0}d\rho_{0}\\
& \leq\sup_{(u, b, a, r)\in K_{0}}\int_{\mathcal{E}}ud\rho+ bdB + adA +\int_{\mathbb{R}}rd\bar{\rho}_{1}-\phi_{0}d\rho_{0}\\
& =\beta^{*}(\rho, B, A, \rho_1).
\end{align*}
This completes the proof.
\end{proof}

\end{document}